\def\.{\cdot}
\def\vs{\vskip .6cm}
\def\la{\langle}
\def\ra{\rangle}
\def\n{\nabla}
\def\beq{\begin{equation}}
\def\eeq{\end{equation}}
\def\bea{\begin{eqnarray*}}
\def\eea{\end{eqnarray*}}
\def\beaa{\begin{eqnarray}}
\def\eeaa{\end{eqnarray}}
\def\ba{\begin{array}}
\def\ea{\end{array}}
\def \RM{\mathbb{R}}
\def \CM{\mathbb{C}}
\def \SM{\mathbb{S}}
\def\Ric{\mathrm{Ric}}
\def\id{\mathrm{id}}
\def\be{\begin{equation}}
\def\ee{\end{equation}}
\def\tr{\mathrm{tr}}
\def\sp{\mathfrak{sp}}
\def\SU{\mathrm{SU}}
\def\A{\mathrm{A}}
\def\G{\mathrm{G}}
\def\R{\mathrm{R}}
\def\End{\mathrm{End}}
\def\vol{\mathrm{vol}}
\def\Cl{\mathrm{Cl}}
\def\Sp{\mathrm{Sp}}
\def\res{\arrowvert}
\def\scal{\mathrm{scal}}
\def\P{\mathrm{P}}
\def\T{\mathrm{T}}
\newtheorem{epr}{Proposition}[section]
\newtheorem{ath}[epr]{Theorem}
\newtheorem{elem}[epr]{Lemma}
\newtheorem{ecor}[epr]{Corollary}
\theoremstyle{definition}
\newtheorem{ere}[epr]{Remark}
\title[Generalized Killing spinors]{Generalized Killing spinors on spheres}
\author{Andrei Moroianu, Uwe Semmelmann}
\address{Andrei Moroianu \\ Universit\'e de Versailles-St Quentin \\
Laboratoire de Math\'ematiques \\ UMR 8100 du CNRS\\
45 avenue des \'Etats-Unis\\
78035 Versailles, France }
\email{andrei.moroianu@math.cnrs.fr}
\address{Uwe Semmelmann\\
Institut f\"ur Geometrie und Topologie \\
Fachbereich Mathematik\\
Universit{\"a}t Stuttgart\\
Pfaffenwaldring 57 \\
70569 Stuttgart, Germany
}
\email{uwe.semmelmann@mathematik.uni-stuttgart.de}
\date{\today}
\thanks{This work was done during a ``Research in Pairs'' stay at CIRM, Luminy. We warmly thank the CIRM for hospitality. The first author was partially supported by the contract ANR-10-BLAN 0105 ``Aspects Conformes de la G{\'e}om{\'e}trie''. }
\begin{document}

\begin{abstract}
We study generalized Killing spinors on round spheres $\SM^n$. We show that on the standard sphere
$\SM^8$ any generalized Killing spinor has to be an ordinary Killing spinor.
Moreover we classify generalized Killing spinors on $\SM^n$ whose associated symmetric
endomorphism has at most two eigenvalues and recover in particular Agricola--Friedrich's canonical spinor on $3$-Sasakian manifolds of dimension $7$. 
Finally we show that it is not possible to
deform Killing spinors on standard spheres into genuine generalized Killing spinors.
\vs
\noindent 2010 {\it Mathematics Subject Classification}: Primary: {53C25, 53C27, 53C40}
\smallskip

\noindent {\it Keywords}: {generalized Killing
  spinors, parallel spinors.} 
\end{abstract}

\maketitle

\section{Introduction}

A {\it generalized Killing spinor} on a spin manifold $(M,g)$ is a non-zero spinor $ \Psi \in \Gamma(\Sigma M)$
satisfying for all vector fields $X$ the equation $\nabla_X\Psi=A(X)\cdot\Psi$, where $A$ is some symmetric 
endomorphism field. 
If $A$ is a non-zero multiple of the identity, $\Psi$ is called a Killing spinor
\cite{ba,bfgk}. We will call generalized Killing spinors with $A\neq \lambda\id$ {\it genuine} generalized
Killing spinors.

Generalized Killing spinors arise naturally  as the restrictions of parallel spinors on spin manifolds $\hat M$
to hypersurfaces $M \subset \hat M$ (see \cite{bgm,friedrich:98,kf01,morel03,gkse}). In this case the
endomorphism $A$ is half of the second fundamental form of $M$. The converse is true under certain conditions, e.g.
when both the manifold $(M,g)$ and the spinor $\Psi$ are real analytic \cite{amm}.

In low dimensions any generalized Killing spinor $\Psi$ defines a $G$-structure on $M$, where $G$ is the stabilizer of $\Psi$ at some
point. The intrinsic torsion of this $G$-structure is determined by the endomorphism $A$, and since $A$ is assumed to be symmetric, 
some part of the intrinsic torsion has to vanish. This
leads to interesting reformulations of the existence of generalized Killing spinors, e.g. they correspond to half-flat 
$\SU(3)$-structures \cite{chs,hi03} in dimension $6$ and to co-calibrated $\G_2$-structures \cite{cs06,fg} in dimension $7$.

In \cite{gkse} we started an investigation of generalized Killing spinors on Einstein manifolds, motivated by an
analogue of the Goldberg conjecture. We showed that any generalized Killing spinor on the standard spheres 
$\SM^2$ and $\SM^5$, as well as on any $4$-dimensional Einstein manifolds of positive scalar curvature
has to be an ordinary Killing spinor and we have constructed examples of genuine generalized Killing spinors on $\SM^3$.
Moreover, we gave an account of the other examples
of genuine generalized Killing spinors on Einstein manifolds which can be found in the recent literature
 on $\SM^3 \times \SM^3$ and $\CM \P^3$ (cf. \cite{cs06,ms,sh}), and on
$7$-dimensional $3$-Sasakian manifolds (cf.  \cite{af10}). 

In the present article we concentrate on the existence question for generalized Killing spinors on
 standard spheres. It is a classical theorem that any Einstein hypersurface of positive scalar curvature
in the Euclidean space $\RM^{n+1}$ is locally isometric to $\SM^n$. The round spheres are thus the only Einstein hypersurfaces
in $\RM^{n+1}$ admitting generalized Killing spinors. Our problem can be rephrased into the question: {\em Is
it possible to realize  standard spheres as hypersurfaces of  non-flat manifolds with reduced holonomy, e.g.
Calabi-Yau or hyperk\"ahler manifolds?}

Even on such simple manifolds as the standard spheres, the problem of proving 
existence or non existence of genuine generalized Killing spinors turns out to be extremely difficult.
In this article we obtain the following partial results: in Section \ref{sect3} we show that on $\SM^8$ any 
generalized Killing spinor has to be an ordinary Killing spinor. The same statement is true for
any $8k$-dimensional standard sphere if a natural vector field associated to the spinor does not vanish identically.
In Section \ref{sect4} we consider generalized Killing spinors on $\SM^n$ for which the symmetric endomorphism $A$ has
exactly two eigenvalues. We show that this is possible only in dimension $3$ and $7$, where the
generalized Killing spinors
coincide with the examples mentioned above (see also \cite{gh} for similar examples on 3-dimensional Heisenberg manifolds). 

In the last section we investigate deformations
of generalized Killing spinors. Using the Weitzenb\"ock formula for trace-free symmetric tensors we prove a rigidity result for Killing spinors on spheres, similar in some sense with the rigidity of Einstein metrics \cite[Sect. 4.63]{besse}.

\section{Preliminaries}
We refer to \cite{bfgk,lm} for basic definitions in spin geometry and list below some of the most important facts which will be needed in the sequel.
Let $(M^n,g)$ be an $n$-dimensional Riemannian spin manifold with real spinor bundle $\Sigma M$. The
Levi-Civita connection $\nabla$ induces a connection  on $\Sigma M$, also denoted by $\nabla$. In addition the
real spinor bundle $\Sigma M$ is endowed with a $\nabla$-parallel Euclidean scalar product $\la.,.\ra$.

Throughout this article we will identify $1$-forms and bilinear forms with vectors and
endomorphisms respectively, by the help of the Riemannian metric. 

The Clifford multiplication with tangent vectors is parallel with respect to $\nabla$ and skew-symmetric with
respect to $\la.,.\ra$:
\beq\label{skew}
\la X\cdot \Psi,\Phi\ra=-\la\Psi,X\cdot\Phi\ra,\qquad\forall\ X,Y\in\T M,\ \forall\ \Psi,\Phi\in\Sigma M.
\eeq
In particular $\la X\cdot \Psi, \Psi \ra = 0$ for any vector field $X$ and spinor $\Psi$. The Clifford multiplication with $2$-forms is defined via the equation
\beq\label{2f}
(X \wedge Y) \cdot \Psi \;=\; X \cdot Y \cdot \Psi \;+\; g(X, Y ) \, \Psi .
\eeq
Using \eqref{skew} and the basic Clifford formula $X\cdot Y\cdot+Y\cdot X\cdot+2g(X,Y)\id=0$, we easily get 
\beq\label{xy}
\la X\cdot Y\cdot \Psi,\Psi\ra=-g(X,Y)\la\Psi,\Psi\ra,\qquad\forall\ X,Y\in\T M,\ \Psi\in\Sigma M,
\eeq
which together with \eqref{2f} shows that Clifford product with 2-forms is also skew-symmetric.

The  curvature $\R ^{\Sigma M}$ of the spinor bundle and the Riemannian curvature are related by 
\beq\label{curv-0}
\R ^{\Sigma M}_{X, Y}\, \Psi \;=\; \tfrac12 \mathcal R (X \wedge Y) \cdot \Psi \qquad\forall\ X,Y\in
\T M,\ \Psi\in\Sigma M ,
\eeq
where  $\mathcal R : \Lambda^2M \rightarrow \Lambda^2 M$ denotes the curvature operator 
defined by
$$
g( \mathcal R (X \wedge Y), U \wedge V ): = g( \R _{X, Y}U, V),\qquad  \R _{X, Y}:=[\nabla_X,\nabla_Y]-\nabla_{[X,Y]}.
$$ 
Note that with our convention  the curvature operator on the standard sphere acts on 2-forms as minus the identity.

A  {\em generalized Killing spinor}  \cite{amm,bgm,kf01,gkse} on $(M,g)$ is a spinor $\Psi$ satisfying  the
equation 
\beq\label{gks}
\nabla_X\Psi=A(X)\.\Psi, \qquad\forall\  X\in \T M,
\eeq 
where $A\in\Gamma(\End(\T M))$ is  some symmetric endomorphism field, sometimes called the endomorphism {\em associated} to $\Psi$. 
Clearly a generalized Killing spinor $\Psi$ has constant length and by rescaling we may always assume 
that $|\Psi|^2=1$.

After taking a further covariant derivative in Eq. (\ref{gks}) and skew-symmetrizing one obtains
the curvature equation (see \cite[Eq. (9)]{gkse}):
\beq\label{curv}
(d^\nabla A)(X, Y)\; =\; 
 [(\nabla_X A)Y  \,-\, (\nabla_Y A)X  ] \cdot \Psi \;=\; 2\, A(X) \wedge A(Y) \cdot \Psi+\tfrac12 \, \mathcal R (X  \wedge Y) \cdot \Psi.
\eeq
Moreover, one has the following constraint equations (\cite[Eqs. (11) and (12)]{gkse}):
\beq
0   \;=\; \delta^\nabla A \; +  \; d \tr A  ,  \label{three1}
\eeq
\beq
\scal  \;=\; 4 (\tr A)^2 \;-\; 4 \tr A^2 , \label{three2}
\eeq
where $\delta^\nabla A: = - \sum_{i=1}^n (\nabla_{e_i} A)e_i$ denotes the
divergence of $A$.


It is well known that the standard sphere $\SM^n$ admits the maximal possible number of real Killing spinors
trivializing the spinor bundle $\Sigma M$, cf. \cite{ba}. About the existence of generalized Killing spinors much less is
known. We quote the following previous results:
\begin{itemize}
\item There are no genuine generalized Killing spinors on $\SM^2, \SM^4$ and $\SM^5$, cf. \cite{gkse}. 
\item There are examples of genuine generalized Killing spinors on $\SM^3$ of the form
$\Psi = \xi \cdot  \Phi$, where $\xi$ is a unit length left-invariant Killing vector field and $\Phi$ is a Killing spinor with
Killing constant $\frac12$. In this example  the symmetric endomorphism $A$  has eigenvalue $\frac12$
of multiplicity $1$, and eigenvalue $-\frac32$ of multiplicity $2$, cf. \cite{gkse}. 
\item There is a genuine generalized Killing spinor on $\SM^7$, which again is of the form $\Psi = \xi \cdot  \Phi$,
where $\xi$ is a unit length Killing vector field on $\SM^7$ and $\Phi$ is a certain Killing spinor. Like in dimension $3$, the eigenvalues of $A$ are $\frac12$ and $-\frac32$, this time with multiplicities $3$ and $4$, respectively, cf. \cite{af10}.
\end{itemize}

\section{Generalized Killing spinors on $\SM^{8k}$}\label{sect3}

The aim of this section is to show that every generalized Killing spinor on $\SM^{8}$ is a Killing spinor, as well as a partial result in the same direction for all spheres $\SM^{8k}$.

Recall that in dimension $8k$ the real spin representation splits as $\Sigma_{8k} = \Sigma_{8k}^+ \oplus \Sigma_{8k}^-$, where
$\Sigma_{8k}^\pm $ are the $\pm 1$-eigenspaces of 
the multiplication with the volume element and are interchanged by Clifford multiplication with
vectors. Correspondingly,
$\Psi$ splits as $\Psi \;=\; \Psi^+ \;+\; \Psi^-$. Let $\eta$ be the vector field on $\SM^{8k}$ given by 
\beq\label{eta}g(\eta,X) = \la X\cdot\Psi^+,\Psi^-\ra,\qquad\forall \ X\in\T \SM^{8k}.\eeq
If the form $\eta$ does not vanish identically, we have the following:

\begin{ath}\label{s8k}
Let $\Psi$  be a generalized Killing spinor on $\SM^{8k}$. If the one-form defined in \eqref{eta} is non-vanishing on a dense subset, then $\Psi$ is a Killing spinor.
\end{ath}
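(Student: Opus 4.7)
The plan is to use the splitting $\Sigma = \Sigma^+ \oplus \Sigma^-$ available in dimension $8k$ to extract a tensorial system that, combined with the curvature equation \eqref{curv} on the sphere, forces $A$ to be a scalar multiple of the identity wherever $\eta$ is non-zero. Writing $\Psi = \Psi^+ + \Psi^-$, the generalized Killing equation decouples as $\nabla_X \Psi^\pm = A(X) \cdot \Psi^\mp$, because Clifford multiplication by a vector interchanges $\Sigma^\pm$. A direct computation using \eqref{skew} and \eqref{xy} then yields
\[
\nabla_Y \eta \;=\; f\, A(Y), \qquad \nabla f \;=\; -4\, A(\eta), \qquad f := |\Psi^+|^2 - |\Psi^-|^2.
\]
Since $A$ is symmetric, $\nabla \eta$ is symmetric, so $\eta$ is closed; as $\SM^{8k}$ is simply connected, $\eta = dh$ for some function $h$. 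One also checks immediately that $|\eta|^2 + \tfrac14 f^2$ is constant along the sphere, so the open set $\{\eta \ne 0\}$ coincides with $\{f^2 < 4c\}$ for that constant $c$.

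Next I would turn to the curvature equation \eqref{curv}, which on $\SM^{8k}$ (where $\mathcal R = -\id$) splits by chirality into
\[
(d^\nabla A)(X,Y) \cdot \Psi^\pm \;=\; 2\, A(X) \wedge A(Y) \cdot \Psi^\mp \;-\; \tfrac12\, (X \wedge Y) \cdot \Psi^\mp.
\]
Independently, differentiating the relation $\nabla_Y \eta = f A(Y)$ once more and applying the Ricci identity together with the standard-sphere value $R_{X,Y} \eta = g(Y, \eta) X - g(X, \eta) Y$ produces the purely tensorial identity
\[
f\, (d^\nabla A)(X, Y) \;=\; g(Y, \eta) X - g(X, \eta) Y + 4\, g(A(X), \eta)\, A(Y) - 4\, g(A(Y), \eta)\, A(X).
\]

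The final step is to substitute this tensorial expression for $(d^\nabla A)(X, Y)$ into the spinorial identity above and pair the result with well-chosen test spinors such as $Z \cdot \Psi^\pm$; combining with the universal constraints \eqref{three1}--\eqref{three2} and taking suitable traces over an orthonormal frame, I expect this to force the trace-free part of $A$ to vanish at every point of the dense open set $\{\eta \ne 0\}$. Continuity of $A$ then propagates $A = \lambda\, \id$ to the whole sphere, so $\Psi$ is a Killing spinor. The hard part will be this final extraction: the spinorial and tensorial relations form a highly redundant system, and one must identify the right combinations of Clifford contractions to isolate the trace-free part of $A$ cleanly. The hypothesis $n = 8k$ is essential throughout, since the auxiliary vector field $\eta$---which is what couples $A$ to the geometry of the sphere---exists only thanks to the real parity decomposition $\Sigma = \Sigma^+ \oplus \Sigma^-$; and the density hypothesis on $\eta$ is precisely what guarantees that the pointwise rigidity extracted on $\{\eta \ne 0\}$ extends by continuity to all of $\SM^{8k}$.
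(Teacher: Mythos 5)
Up to the tensorial identity your proposal reproduces the paper's argument faithfully: the decoupling $\nabla_X\Psi^\pm=A(X)\cdot\Psi^\mp$, the formulas $\nabla_Y\eta=f\,A(Y)$ and $df=-4A(\eta)$ (with $f=|\Psi^+|^2-|\Psi^-|^2$, which is $1-2h$ in the paper's normalization), and the identity obtained from the Ricci identity on $\nabla\eta$ are all correct and are exactly the paper's \eqref{dh}, \eqref{deta} and the displayed formula preceding \eqref{ceta}. The side observations (closedness of $\eta$, constancy of $|\eta|^2+\tfrac14 f^2$) are true but play no role.

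The genuine gap is that the decisive step is only announced, not carried out: you say you ``expect'' that substituting the tensorial expression for $d^\nabla A$ into the spinorial curvature equation and pairing with suitable test spinors will kill the trace-free part of $A$ on $\{\eta\ne0\}$, and you yourself flag this extraction as the hard part. This is precisely where all the content of the proof lies, and the paper's way of doing it is also subtly different from what you anticipate. After substituting into \eqref{curv} to get \eqref{ceta}, one does \emph{not} obtain the pointwise vanishing of the trace-free part of $A$ on $\{\eta\ne0\}$; what comes out of the two specific contractions (Clifford multiplication by $\sum_i e_i$, respectively by $\sum_i A(e_i)$, followed by the scalar product with $\Psi$) are the algebraic relations $A^2(\eta)=aA(\eta)-\tfrac{8k-1}{4}\eta$ and $A^3(\eta)=(a^2-2k(8k-1)+\tfrac14)A(\eta)-\tfrac a4\eta$ (\eqref{a1}, \eqref{a2}), which combine to give $A(\eta)=\tfrac a{8k}\eta$ and then the \emph{scalar} identity $a^2=16k^2$ on the dense set $\{\eta\ne 0\}$, hence everywhere by continuity. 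Only at that point does the global trace constraint \eqref{three2} force $|A-\tfrac a{8k}\id|^2=\tr A^2-\tfrac{a^2}{8k}=0$, i.e.\ $A=\pm\tfrac12\id$. So the conclusion is reached through a scalar rigidity plus the constraint equation, not through a pointwise vanishing of the trace-free part extracted on $\{\eta\ne0\}$; without exhibiting these contractions and the resulting relations, your argument stops exactly where the theorem still has to be proved.
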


\begin{proof}
We assume that $\Psi$ is scaled to have unit length.  Denoting $a:=\tr(A)$ and using the fact that the scalar curvature of $\SM^{8k}$ equals $8k(8k-1)$, Eq. \eqref{three2} reads $a^2 - \tr A^2 =2k(8k-1)$. From \eqref{gks} we get:
\beq\label{pm}
\nabla_X \Psi^\pm \;=\; A(X) \cdot \Psi^\mp .
\eeq

Let $S^-$ denote the open set of  points $p \in \SM^{8k}$ with $\Psi^-_p \neq 0$. 
It is easy to see that $S^-$ is dense. Indeed, if $U$ were a non-empty
open subset of $\SM^{8k}\setminus S^-$, then \eqref{pm} yields $A(X)\cdot \Psi^+=0$ for all $X\in\T U$, so
$A|_U=0$. By \eqref{pm} again, $\Psi^+$ is parallel (and non-zero) on $U$, so the Ricci tensor of $\SM^{8k}$ vanishes on $U$, which is absurd.
A similar argument shows that the set $S^+$ where $\Psi^+$ is non-vanishing is also dense, so the set $S:=S^-\cap S^+$ is dense in $\SM^{8k}$.

We denote by $h : =|\Psi^-|^2$ the length function of $\Psi^-$. Since $\Psi$ has unit length, $|\Psi^+|^2=1-h$.
From \eqref{pm}, the derivative of $h$ in the direction of any tangent vector $X$ reads
$$
dh(X) =
2\la \nabla_X \Psi^-, \Psi^-\ra \;=\; 2\la  A(X) \cdot \Psi^+, \Psi^- \ra \;=\; 2\eta(A(X))=2g(A(\eta),X),
$$
whence 
\beq\label{dh} dh=2A(\eta).
\eeq
Taking the covariant derivative in the direction of $Y$ in \eqref{eta}, assuming that $X$ is
parallel at some point and using \eqref{pm} yields
\bea
g( \nabla_Y \eta, X) & =& \la X \cdot A(Y) \cdot \Psi^-, \Psi^- \ra   \;+\; \la X \cdot \Psi^+, A(Y) \cdot \Psi^+ \ra  \\
& = &- g( X, A(Y)) \, |\Psi^-|^2 \;+\; g (X, A(Y)) \,  |\Psi^+|^2   \\
& = & (1-2h) g( A(Y), X) ,
\eea
so 
\beq\label{deta} \nabla_Y\eta=(1-2h)A(Y),\qquad\forall\  Y\in \T\SM^{8k}.
\eeq

Taking the covariant derivative with respect to some vector field $X$ in this equation, using \eqref{dh} and skew-symmetrizing, yields:
$$R_{Y,X}\eta=(1-2h)((\nabla_YA)X-(\nabla_XA)Y)-4g(A(\eta),Y)A(X)+4g(A(\eta),X)A(Y), $$
and since the curvature of the round sphere satisfies $R_{Y,X}Z=g(X,Z)Y-g(Y,Z)X$ for all vectors $X,Y,Z$, we get
$$(1-2h)((\nabla_YA)X-(\nabla_XA)Y)=4g(A(\eta),Y)A(X)-4g(A(\eta),X)A(Y)+g(X,\eta)Y-g(Y,\eta)X.$$
Using this last equation in the curvature equation \eqref{curv} we obtain that for every vectors $X,Y$ the following relation holds:
\beq\label{ceta}\begin{split}
&(2h-1)\left(2A(X)\cdot A(Y) +2g(A(X),A(Y))-\tfrac12 X  \cdot Y-\tfrac12g(X,Y)\right) \cdot \Psi\\=&\left(4g(A(\eta),Y)A(X)-4g(A(\eta),X)A(Y)+g(X,\eta)Y-g(Y,\eta)X\right)\cdot \Psi
\end{split}\eeq
(we have used the well known formula $X\wedge Y=X\cdot Y+g(X,Y)$ and the fact that the curvature endomorphism of the round sphere is minus the identity).

In \eqref{ceta} we take the Clifford product with X and sum over an orthonormal basis $X=e_i$. Using the standard formulas in Clifford calculus this yields
\[\begin{split}&(2h-1)\left(-2aA(Y) +2A^2(Y)+\tfrac{8k-1}2 Y\right) \cdot \Psi\\=&\left(-4ag(A(\eta),Y)-4A(\eta)\cdot A(Y)+\eta\cdot Y+8kg(\eta,Y)\right)\cdot\Psi.
\end{split}\]
Taking the scalar product with $\Psi$ in this formula gives 
$$0=-4ag(A(\eta),Y)+4g(A(\eta), A(Y))+(8k-1)g(\eta,Y),\qquad\forall\  Y\in\T\SM^{8k},$$
whence 
\beq\label{a1} A^2(\eta)=aA(\eta)-\tfrac{8k-1}4\eta.
\eeq
We now take the Clifford product with A(X) in \eqref{ceta} and sum over an orthonormal basis $X=e_i$ to obtain
\[\begin{split}&(2h-1)\left(-2\tr A^2A(Y) +2A^3(Y)+\tfrac{1}2 aY-\tfrac12A(Y)\right) \cdot \Psi\\=&\left(-4\tr A^2g(A(\eta),Y)-4A^2(\eta)\cdot A(Y)+A(\eta)\cdot Y+ag(\eta,Y)\right)\cdot\Psi.
\end{split}\]
Taking again the scalar product with $\Psi$ and using \eqref{three2} yields
$$0=(8k(8k-1)-4a^2)g(A(\eta),Y)+4g(A^2(\eta), A(Y))-g(A(\eta),Y)+ag(\eta,Y),\qquad\forall\  Y\in\T\SM^{8k},$$
whence 
\beq\label{a2} A^3(\eta)=(a^2-2k(8k-1)+\tfrac{1}4)A(\eta)-\tfrac a4\eta.
\eeq
Plugging \eqref{a1} into this equation shows that $A(\eta)=\tfrac1{8k}a\eta$, so from \eqref{a1} again we get 
$$\tfrac{a^2}{64k^2}\eta=\tfrac{a^2}{8k}\eta-\tfrac{8k-1}4\eta.$$
As $\eta$ is non-vanishing on a dense subset, we obtain $a^2=16k^2$ on $\SM^{8k}$. This, together with \eqref{three2}, shows that the square norm of the trace-free symmetric tensor $A-\tfrac{a}{8k}\id$ vanishes:
$$|A-\tfrac{a}{8k}\id|^2=\tr(A-\tfrac{a}{8k}\id)^2=\tr A^2-\tfrac{a}{4k}\tr A+\tfrac{a^2}{8k}=\tr A^2-\tfrac{a^2}{8k}=16k^2-2k(8k-1)-2k=0.$$
This implies that $A=\tfrac a{8k}\id=\pm\tfrac12\id$ and thus finishes the proof.\end{proof}

\begin{ecor}\label{s8}
Every generalized Killing spinor $\Psi$ on $\SM^8$ is a Killing spinor.
\end{ecor}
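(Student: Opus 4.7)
The plan is to reduce to Theorem \ref{s8k} with $k=1$ by verifying its hypothesis: the one-form $\eta$ defined in \eqref{eta} must be non-vanishing on a dense subset of $\SM^8$.

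First I would exploit the low-dimensional coincidence particular to $\SM^8$: the two real half-spin representations $\Sigma_8^+$ and $\Sigma_8^-$ each have real dimension $8$, the same as the tangent space. Combined with the Clifford identity $X\.X\.\Psi^+=-|X|^2\Psi^+$, this shows that for any non-zero $\Psi^+\in\Sigma_8^+$, the Clifford product map
\[
\RM^8\longrightarrow\Sigma_8^-,\qquad X\longmapsto X\.\Psi^+
\]
is injective, and hence a linear isomorphism by equality of dimensions. This is the only ingredient beyond Theorem \ref{s8k} that is special to dimension $8$.

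Second, as already observed inside the proof of Theorem \ref{s8k}, the sets $S^\pm:=\{\Psi^\pm\neq 0\}$ are both dense in $\SM^8$, so the intersection $S:=S^+\cap S^-$ is dense as well. Suppose for contradiction that $\eta$ vanishes identically on some non-empty open set $U\subset\SM^8$. By density, there exists $p\in U\cap S$. At this point, the definition \eqref{eta} gives $\la X\.\Psi^+,\Psi^-\ra=0$ for every $X\in\T_p\SM^8$, so $\Psi^-(p)$ is orthogonal to the image of $X\mapsto X\.\Psi^+$. By the first step, that image is all of $\Sigma_8^-$, forcing $\Psi^-(p)=0$ and contradicting $p\in S$.

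Consequently $\eta$ is non-vanishing on a dense subset of $\SM^8$, and Theorem \ref{s8k} applies directly to conclude that $\Psi$ is a Killing spinor. Essentially all the analytic content lives in the theorem; the corollary is a short dimension-counting argument, and the only place where real obstruction could appear — the possibility of $\eta$ vanishing on an open set — is killed by the triality-type isomorphism between $\T\SM^8$ and $\Sigma_8^-$ induced by $\Psi^+$.
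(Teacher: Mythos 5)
Your proof is correct and follows essentially the same route as the paper: the key point in both is that for $p\in S$ the map $X\mapsto X\cdot\Psi^+$ is an isomorphism $\T_p\SM^8\to(\Sigma_8^-)_p$ by the Clifford identity and equality of dimensions, so $\eta$ cannot vanish where $\Psi^\pm$ are both non-zero, and density of $S$ (already established in the proof of Theorem \ref{s8k}) lets Theorem \ref{s8k} apply.
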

\begin{proof}
For every $p\in S^+$ the injective map $X \in \T_p \SM^8 \mapsto X \cdot \Psi^+ \in (\Sigma_8^-)_p$ is
bijective since $\dim \T _p \SM^8 = \dim (\Sigma_8^-)_p=8$. Consequently, the vector field $\eta$ is non-vanishing on $S$.
\end{proof}

\section{Generalized Killing spinors with two eigenvalues}\label{sect4}

In this section we consider generalized Killing spinors $\Psi$ on the sphere $(M,g):=\SM^n$ ($n\ge 3$) and assume that the associated symmetric endomorphism
$A$ has at each point at most two eigenvalues $\lambda$ and $\mu$. If these eigenvalues coincide at each point, then it is well known that their common value is constant on $M$, so $\Psi$ is a Killing spinor. We assume from now on that $\lambda \neq \mu$ at least at some point of $M$, and thus on some non-empty contractible open set $S$ (it turns out that they are actually constant on $M$, cf. Lemma \ref{41}). We will denote by  $\T^\lambda  \subset \T M $ and  $\T^\mu  \subset \T M$ the eigenspaces corresponding to $\lambda$ and $\mu$ respectively. These two subspaces are mutually orthogonal at each point and are well-defined distributions on $S$.

We start with calculating the derivative $d^\nabla A$ at points of $S$ in three different cases. First, let $X,Y\in\T^\mu$:
\bea
(\nabla_XA)Y - (\nabla_YA)X & = &  X(\mu)Y+ \mu \nabla_XY  -  A(\nabla_XY) - Y(\mu)X  - \mu \nabla _YX+ A(\nabla_YX)  \\
& =  &  (\mu - \lambda) (\nabla_XY)^\lambda  - (\mu - \lambda) (\nabla_YX)^\lambda  +  X(\mu)Y - Y(\mu)X \\
& = &  (\mu - \lambda)[X,Y]^\lambda   +  X(\mu)Y - Y(\mu)X,
\eea
Where the superscript $\lambda$ denotes the projection of the corresponding vector on $\T^\lambda$. 
A similar calculation for a pair of vectors $U, V\in \T^\lambda$ leads to
$$
(\nabla_VA)U - (\nabla_UA)V = (\lambda - \mu ) [U,V]^\mu  + V(\lambda)U - U(\lambda)V .  
$$
Finally, on a mixed pair of vectors $X\in\T^\mu,\ V\in\T^\lambda$, we find
$$
(\nabla_XA)V - (\nabla_VA)X =  (\lambda - \mu) (\nabla_XV)^\mu  - (\mu - \lambda)(\nabla_VX)^\lambda  - V(\mu)X + X(\lambda)V  .
$$

Substituting the equations above  into the curvature equation (\ref{curv}), with $\mathcal R = -\id$ for the sphere, we obtain for every $X,Y\in\T^\mu$ and $U, V\in \T^\lambda$:
\begin{align}
(2\mu^2 - \tfrac12) \, X \wedge Y \cdot \Psi   & \; =\;  (\mu - \lambda) [X,Y]^\lambda  \cdot \Psi + (X(\mu)Y - Y(\mu)X) \cdot \Psi  \label{curv1},\\[1.5ex]
(2\lambda^2 - \tfrac 12) \, V \wedge U \cdot \Psi &  \; =  \; (\lambda -\mu) [V,U]^\mu  \cdot \Psi + ( V(\lambda)U - U(\lambda)V)\cdot \Psi \label{curv2},\\[1.5ex]
(2\lambda \mu - \tfrac12) \, X \wedge V \cdot \Psi & \;  = \;  (\lambda - \mu) ((\nabla_XV)^\mu   + (\nabla_VX)^\lambda  ) +(X(\lambda)V -V(\mu)X ) \cdot \Psi \label{curv3}.
\end{align}

\bigskip

\begin{elem}\label{41}
If $n\ge 3$, the eigenvalues $\lambda$ and $\mu$ are constant on $\SM^n$.
\end{elem}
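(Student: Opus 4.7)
The plan is to combine the curvature equations \eqref{curv1}--\eqref{curv3} with the algebraic constraint \eqref{three2} to show that $d\lambda$ and $d\mu$ both vanish on $S$. The essential input is first to establish that $d\mu$ takes values in $\T^\lambda$ and $d\lambda$ takes values in $\T^\mu$, after which a single differentiated form of the constraint forces the conclusion.

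First I would Clifford-multiply \eqref{curv1} on the left by a vector $X \in \T^\mu$ and take the inner product with $\Psi$. Using the identities $X \cdot (X \wedge Y) \cdot \Psi = -|X|^2 Y \cdot \Psi + g(X,Y)\, X \cdot \Psi$ and $X \cdot W \cdot \Psi = X \wedge W \cdot \Psi - g(X,W)\,\Psi$ for any vector $W$, and noting that $\langle Z \cdot \Psi, \Psi\rangle = 0$ for $Z$ a vector or a $2$-form (cf.\ \eqref{skew} and \eqref{xy}), the paired equation reduces to
\[
Y(\mu)\,|X|^2 \;=\; X(\mu)\,g(X,Y), \qquad \forall\ X, Y \in \T^\mu;
\]
the contribution of $(\mu-\lambda)[X,Y]^\lambda$ drops out because $X \in \T^\mu$ is orthogonal to $\T^\lambda$. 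Choosing a non-zero $X \in \T^\mu$ with $X \perp Y$, which is possible as soon as $\dim \T^\mu \geq 2$, forces $Y(\mu) = 0$, hence $d\mu \in \T^\lambda$. The analogous treatment of \eqref{curv2} gives $d\lambda \in \T^\mu$ whenever $\dim \T^\lambda \geq 2$.

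Next I would differentiate the constraint \eqref{three2}. With $\scal = n(n-1)$ and with $p := \dim \T^\lambda$, $q := \dim \T^\mu$ locally constant on $S$, \eqref{three2} reads $(p\lambda+q\mu)^2 - p\lambda^2 - q\mu^2 = n(n-1)/4$, and differentiation yields
\[
p\bigl((p-1)\lambda + q\mu\bigr)\,d\lambda \;+\; q\bigl(p\lambda + (q-1)\mu\bigr)\,d\mu \;=\; 0.
\]
In the generic case $p, q \geq 2$, the two summands lie in the orthogonal subspaces $\T^\mu$ and $\T^\lambda$ respectively, so each vanishes separately. The two coefficients cannot vanish simultaneously, since subtracting the two resulting equations gives $\lambda = \mu$, contradicting the assumption on $S$. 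Therefore at least one of $d\lambda$, $d\mu$ is identically zero, and the algebraic constraint then determines the other eigenvalue as a root of a quadratic with constant coefficients, hence also a constant. Continuity and the connectedness of $\SM^n$ then extend constancy from $S$ to the entire sphere.

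The main obstacle is the edge case when one eigenspace has rank $1$, say $\dim \T^\lambda = 1$: the argument giving $d\lambda \in \T^\mu$ is no longer available. One can still extract the cross-relations $X(\lambda) = (\lambda-\mu)\,g(\nabla_V V, X)$ and $V(\mu) = -(\lambda-\mu)\,g(\nabla_X X, V)$ from \eqref{curv3} by analogous Clifford manipulations (multiplying by a unit $V \in \T^\lambda$ or a unit $X \in \T^\mu$ and pairing with $\Psi$), and combine them with the orthogonal decomposition of the differentiated constraint to close the argument; the subtle point is to rule out a non-trivial variation of $\mu$ along the integral curves of the (necessarily geodesic) line field $\T^\lambda$.
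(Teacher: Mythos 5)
Your treatment of the generic case $p,q\ge 2$ is correct and is essentially the paper's own argument: pairing \eqref{curv1} and \eqref{curv2} with $X\cdot\Psi$, resp.\ $V\cdot\Psi$, gives $d\mu\in\T^\lambda$ and $d\lambda\in\T^\mu$, and then one differentiates the constraint \eqref{three2}; your differentiated identity $p\bigl((p-1)\lambda+q\mu\bigr)d\lambda+q\bigl(p\lambda+(q-1)\mu\bigr)d\mu=0$ is the correct one (the coefficient displayed in the paper after \eqref{elm} contains a typo). One small leap: from "at each point at least one of $d\lambda$, $d\mu$ vanishes" you pass to "at least one is identically zero", which does not follow directly; the fix is the paper's own move of differentiating once more: on the open set where $d\mu\ne 0$ one has $p\lambda+(q-1)\mu\equiv 0$, and differentiating this and splitting along $\T^\lambda\oplus\T^\mu$ yields $(q-1)\,d\mu=0$, impossible for $q\ge 2$; likewise for $d\lambda$. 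With that patch the case $p,q\ge 2$ is complete.

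The genuine gap is the rank-one case, which you flag but do not close, and it is not a negligible corner: $(p,q)=(1,2)$ on $\SM^3$ is exactly the configuration realized by the genuine generalized Killing spinors the paper later describes. When $\dim\T^\lambda=1$, the quantity that escapes all three curvature equations is $V(\lambda)$, the derivative of the simple eigenvalue along its own eigendirection; your cross-relations from \eqref{curv3} control $X(\lambda)$ and $V(\mu)$ but not $V(\lambda)$, so after one obtains $X(\mu)=0$ (since $q\ge2$), $X(\lambda)=0$ (from the differentiated constraint, using $\mu\ne 0$ on $S$ when $p=1$) and hence $\nabla_VV=0$, one is left with the single scalar relation $q\mu\,V(\lambda)+q\bigl(\lambda+(q-1)\mu\bigr)V(\mu)=0$ in the two unknowns $V(\lambda)$, $V(\mu)$, and the argument does not terminate; note also that the divergence constraint \eqref{three1} only reproduces your relation $qV(\mu)=(\mu-\lambda)\sum_j g(\nabla_{X_j}X_j,V)$, so it supplies nothing new. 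To be fair, the paper's opening sentence ("$\mu$ is constant in $\T^\mu$-directions, $\lambda$ in $\T^\lambda$-directions") silently makes the same rank-$\ge2$ assumption, so your proposal matches the published argument where that argument is explicit; but as a self-contained proof of the lemma as stated, the rank-one case remains open in your write-up, and closing it requires an input beyond the pairings and the differentiated trace constraint you deploy.
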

\proof
Since the sphere is connected, it is enough to show that $\lambda$ and $\mu$ are constant on the open set $S$. Let $p$ and $q$ denote the dimensions of $\T^\lambda $ and  $\T^\mu $ respectively (which are constant on $S$). The assumption $n\ge 3$ shows that at least one of $p$ and $q$ is larger than 1. Assume for the rest of the proof that $q\ge 2$. 

Taking the scalar product with $U \cdot \Psi$ in equation (\ref{curv2}) for $U\in\T^\lambda$ orthogonal to $V\in\T^\lambda$ implies that 
\beq\label{en} V(\lambda)=0,\qquad\forall V\in \T^\lambda.\eeq
On the other hand, (\ref{three2}) reads
\beq\label{elm}
(p\lambda + q \mu)^2 - (p\lambda^2 + q \mu^2) = \tfrac14 n(n-1).
\eeq
Differentiating this relation with respect to some vector $V\in\T^\lambda$ and using \eqref{en} gives 
$$
V(\mu) (\mu (q-1) + p \lambda) = 0.$$
Assuming that $V(\mu)$ is different from zero on some open set $S'\subset S$, then  
\beq\label{elm1}\mu (q-1) + p \lambda = 0\eeq 
on $S'$. 
Differentiating again with respect to $V\in \T^\lambda$ and using \eqref{en}, we get 
$(q-1)V(\mu)=0$.
The assumption 
that $V(\mu)$ is different from zero on $S'$ implies that $q=1$, which contradicts our assumption $q\ge 2$. Thus $V(\mu)=0$ for all $V\in\T^\lambda$ at each point of $S$.

If $p\ge 2$, a similar argument shows that $X(\lambda)=0$ and $X(\mu)=0$ for every $X\in\T^\lambda$, so $\lambda$ and $\mu$ are constant.

It remains to study the case $p=1$. Then $\T^\mu$ is spanned by a unit vector field $X$ on $S$. If $\lambda$ is not constant on $S$, there exists a non-empty open subset $S'\subset S$ such that $d\lambda\ne 0$ on $S'$. Using \eqref{en} we obtain that $X(\lambda)\ne 0$ on $S'$. On the other hand, taking a further derivative in \eqref{en} and skew-symmetrizing yields $0=[U,V](\lambda)$ for every vector fields $U,V$ tangent to $\T^\lambda$. From \eqref{en} again, this implies $[U,V]^\mu(\lambda)=0$, whence $[U,V]^\mu=0$ at each point of $S'$. Using \eqref{curv2} for $U$ orthogonal to $V$ and both non-zero, yields $4\lambda^2=1$ on $S'$, so $d\lambda=0$ on $S'$, contradicting the definition of $S'$. Thus $\lambda$ is constant on $S$, and by \eqref{elm} $\mu$ is also constant on $S$ since it satisfies a second order polynomial equation with constant coefficients and non-vanishing leading coefficient $q(q-1)$.
\qed

\begin{elem}\label{12}
One of the eigenvalues $\lambda$ and $\mu$ has to be equal to $\pm\frac12$.
\end{elem}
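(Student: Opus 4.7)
The plan is to argue by contradiction, assuming $\mu\neq\pm\tfrac12$ and $\lambda\neq\pm\tfrac12$. Since $\lambda,\mu$ are constant by Lemma \ref{41}, all derivative terms in \eqref{curv1}--\eqref{curv3} vanish. The key auxiliary object is the totally antisymmetric ``Clifford $3$-form''
\[
\omega_\Psi(X,Y,Z):=\la X\.Y\.Z\.\Psi,\Psi\ra,
\]
whose skew-symmetry on orthogonal triples follows from \eqref{skew} and \eqref{xy} and the Clifford anti-commutation relations.

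I would first treat the main case $p:=\dim\T^\lambda\ge 2$ and $q:=\dim\T^\mu\ge 2$. Taking the scalar product of \eqref{curv1} with $V\.\Psi$ for orthonormal $X,Y\in\T^\mu$ and $V\in\T^\lambda$, and of \eqref{curv3} with $Y\.\Psi$, a standard Clifford computation (using $\la Z_1\.\Psi,Z_2\.\Psi\ra=g(Z_1,Z_2)$ and $\la X\.Y\.\Psi,V\.\Psi\ra=-\omega_\Psi(X,Y,V)$) yields respectively
\[
g([X,Y]^\lambda,V)=\tfrac{2\mu^2-1/2}{\lambda-\mu}\,\omega_\Psi(X,Y,V),\qquad g(\nabla_XV,Y)=\tfrac{2\lambda\mu-1/2}{\lambda-\mu}\,\omega_\Psi(X,Y,V).
\]
Using the metric-compatibility identity $g([X,Y]^\lambda,V)=g(X,\nabla_YV)-g(Y,\nabla_XV)$ and substituting, the two relations collapse to the algebraic identity $(2\mu^2+4\lambda\mu-\tfrac32)\,\omega_\Psi(X,Y,V)=0$. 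The symmetric derivation based on \eqref{curv2} in place of \eqref{curv1} yields $(2\lambda^2+4\lambda\mu-\tfrac32)\,\omega_\Psi(V,U,X)=0$. If $\omega_\Psi$ vanishes identically on such mixed triples then $[X,Y]^\lambda=0$, and \eqref{curv1} forces $2\mu^2=\tfrac12$, contradicting our assumption; otherwise both $2\mu^2+4\lambda\mu=\tfrac32$ and $2\lambda^2+4\lambda\mu=\tfrac32$ hold, whence $\mu^2=\lambda^2$ and thus $\mu=-\lambda$, so substitution yields $-2\lambda^2=\tfrac32$, absurd.

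The degenerate cases where $\min(p,q)=1$ need separate handling since only one of \eqref{curv1}, \eqref{curv2} remains non-vacuous, so only one of the algebraic identities above is produced. Assuming for instance $q=1$, the same procedure produces $2\lambda^2+4\lambda\mu=\tfrac32$, and combining with the scalar trace constraint \eqref{elm}, which reads $\lambda[(n-2)\lambda+2\mu]=n/4$ in this case, subtraction yields $(n-3)\lambda^2=(n-3)/4$, hence $\lambda^2=\tfrac14$ as soon as $n\ge 4$. The main obstacle I anticipate is the residual case $n=3$, where the algebraic identity and \eqref{elm} collapse into the same equation; there one must exploit the explicit sphere curvature identity $R_{X,V_i}X=-V_i$ together with the values of $\nabla_{V_i}X$ implied by \eqref{curv3} to derive the additional constraint $\delta^2=1$ where $\delta:=(2\lambda\mu-\tfrac12)/(\lambda-\mu)$; the two cases $\delta=\pm 1$ then directly force $\mu=\pm\tfrac12$ via the short manipulation of $2\lambda\mu-\tfrac12=\pm(\lambda-\mu)$ (solving gives $\mu(2\lambda\pm 1)=\pm(2\lambda\pm 1)/2$, hence $\mu=\pm\tfrac12$ unless $\lambda=\mp\tfrac12$, both contradicting our assumption). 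The most delicate step throughout is the Clifford-algebraic sign bookkeeping producing the coefficients in the two displayed identities above.
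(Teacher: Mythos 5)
Your approach is essentially correct and genuinely different from the paper's. You extract from \eqref{curv1}--\eqref{curv3} (with $\lambda,\mu$ constant) two purely algebraic identities by pairing against $V\cdot\Psi$ resp.\ $Y\cdot\Psi$ and expressing everything through the $3$-form $\omega_\Psi$, then close the system using only metric compatibility of the orthogonal splitting. I checked the sign bookkeeping ($\la X\cdot Y\cdot\Psi,V\cdot\Psi\ra=-\omega_\Psi(X,Y,V)$ and $\la X\cdot V\cdot\Psi,Y\cdot\Psi\ra=\omega_\Psi(X,Y,V)$ for orthogonal triples): your two displayed relations, the resulting constraints $(2\mu^2+4\lambda\mu-\tfrac32)\,\omega_\Psi=0$ and $(2\lambda^2+4\lambda\mu-\tfrac32)\,\omega_\Psi=0$, and the $q=1$, $n\ge4$ reduction via \eqref{elm} are all right. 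The paper argues differently: it first excludes $\lambda\mu=\tfrac14$ (otherwise both distributions would be parallel), uses that even-dimensional spheres carry no distributions, then a dimension count based on the injectivity of $Y\mapsto[X,Y]^\lambda$ to show $\T^\lambda$ is totally geodesic and get $\lambda^2=\tfrac14$ from \eqref{curv2}, and settles $n=3$ by quoting that a unit Killing field $V$ on $\SM^3$ satisfies $|\nabla_XV|=|X|$ and comparing norms in \eqref{curv3} -- which is exactly your constraint $\delta^2=1$, obtained by other means. Your route avoids the parity argument, the $\lambda\mu\ne\tfrac14$ preliminary and the dimension count; its cost is that the low-multiplicity cases must be handled by hand.

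Two points need attention before this is a complete proof. First, the dichotomy in the main case $p,q\ge2$ is stated too coarsely: the negation of ``$\omega_\Psi$ vanishes identically on mixed triples'' does not yield both algebraic identities, since $\omega_\Psi$ could vanish on all $(\lambda,\lambda,\mu)$-triples while being nonzero on some $(\mu,\mu,\lambda)$-triple, or vice versa. The missing subcases are harmless -- vanishing on $(\lambda,\lambda,\mu)$-triples gives $[U,V]^\mu=0$, and then \eqref{curv2} forces $2\lambda^2=\tfrac12$, again a contradiction -- but this must be said; note also that each branch produces an identity between constants, so it suffices to run the dichotomy at a single point. Second, the $n=3$ case is only announced, not proved. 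The plan does work: with $\T^\mu=\RM X$, $|X|=1$, pairing \eqref{curv3} with $X\cdot\Psi$ gives $\nabla_XX=0$, pairing with $U\cdot\Psi$ gives $\nabla_{V_1}X=-\delta\epsilon V_2$ and $\nabla_{V_2}X=\delta\epsilon V_1$, where $\epsilon:=\omega_\Psi(V_1,V_2,X)=\pm1$ is constant because in dimension $3$ the volume element acts as $\pm\id$ on the spinor bundle (so the non-vanishing branch is automatic here); computing $R_{X,V_1}X$ from these data, the undetermined coefficient $g(\nabla_XV_1,V_2)$ cancels and one finds $R_{X,V_1}X=-\delta^2V_1$, whence comparison with $R_{X,V_1}X=-V_1$ gives $\delta^2=1$, and your final factorization $(2\lambda\pm1)(2\mu\mp1)=0$ finishes the argument. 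So what remains is execution rather than a missing idea, but as written the proof is incomplete at these two places.
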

\proof 
Assume first that $\lambda\mu = \frac14$. Then the right hand side of (\ref{curv3}) vanishes, so by Lemma \ref{41} we get $(\nabla_XV)^\mu + (\nabla_VX)^\lambda =0$ for every vector fields $X$ and $U$ tangent to $\T^\mu$ and $\T^\lambda$ respectively. Since $\T^\lambda$ and $\T^\mu$ are orthogonal, this shows that $(\nabla_XV)^\mu = 0 = (\nabla_VX)^\lambda $. Thus $\T^\lambda $ and $\T^\mu $
are two non-trivial parallel distributions on $\SM^n$, which is clearly a contradiction. Consequently, $\lambda\mu \ne \frac14$.

Since even-dimensional spheres do not have any non-trivial distributions, it follows that $n=2k+1$ is odd.
By changing the notations if necessary, we can assume that $\dim(\T^\mu )>\dim(\T^\lambda )$. If $\mu^2=\frac14$ we are done, so for the remaining part of the proof we assume that $\mu^2\ne \frac14$. From \eqref{curv1} it follows that for every $x\in\SM^n$ and $X,Y\in\T_x^\mu $ with $X\perp Y$, the vector $[X,Y]^\lambda $ is non-zero (note that this expression is tensorial in $X$ and $Y$, so it only depends on their values at $x$). Consequently, the map $Y\mapsto [X,Y]^\lambda $ from the orthogonal complement of $X$ in $\T_x^\mu $ to $\T_x^\lambda $ is injective. From the dimensional assumption it follows that $\dim(\T_x^\mu )=k+1$ and $\dim(\T_x^\lambda )=k$, so in particular the above map is bijective. It follows that for every $X\in \T_x^\mu $ and $V\in\T_x^\lambda $ there exists a unique $Y\in\T_x^\mu $, $Y\perp X$, such that $[X,Y]^\lambda =V$. Applying \eqref{curv1} and \eqref{curv3} to these vectors yields
\bea(\lambda - \mu) ((\nabla_XV)^\mu   + (\nabla_VX)^\lambda  )\cdot\Psi&=&(2\lambda \mu - \tfrac12) \, X \cdot V \cdot \Psi =
(2\lambda \mu - \tfrac12) \, X \cdot [X,Y]^\lambda  \cdot \Psi\\&=&\tfrac1{\mu-\lambda}(2\lambda \mu - \tfrac12) (2\mu^2 - \tfrac12)\,X\cdot X\cdot Y \cdot \Psi
\\&=&-\tfrac{|X|^2}{\mu-\lambda}(2\lambda \mu - \tfrac12) (2\mu^2 - \tfrac12) \, Y \cdot \Psi.
\eea
This shows that for every $X\in \T_x^\mu $ and $V\in\T_x^\lambda $, the vector $(\nabla_VX)^\lambda $ vanishes, thus $\T^\lambda $ is a totally geodesic distribution. From \eqref{curv2} we deduce that $\lambda^2=\frac14$ unless $k=1$. It remains to rule out the case where $n=3$. 

In this case $\T^\lambda $ is one-dimensional, so we can consider a unit vector $V$ which spans it at each point. Then $V$ is geodesic and taking the scalar product with $X\cdot\Psi$ in \eqref{curv3} shows that $g(\nabla_XV,X)=0$ for every $X\in\T^\mu $. Thus $V$ is a unit Killing vector field on $\SM^3$. It is well known that every such vector satisfies $|\nabla_XV|=|X|$ for every $X$ orthogonal to $V$. Comparing the norms of the two spinors in \eqref{curv3} yields
$2\lambda\mu-\frac12=\pm(\lambda-\mu)$, which can be rewritten as $(2\lambda\pm1)(2\mu\mp1)=0$. This proves the lemma.
\qed

\noindent

Up to a change of orientation we thus may from now on assume that $\lambda = \frac12$.

\begin{elem}
The distribution $\T^\lambda $ is totally geodesic. Moreover, the following equations hold
for any vectors $X,Y\in\T^\mu$ and $V\in\T^\lambda$:
\begin{align}
(2\mu+1) X \wedge Y \cdot \Psi & \;= \;  [X,Y]^\lambda  \cdot \Psi \label{curv1a},\\[1.5ex]
X \cdot V \cdot \Psi  & \; = - (\nabla_XV)^\mu    \cdot \Psi \label{curv3a}.
\end{align}
\end{elem}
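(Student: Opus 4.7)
My plan is to feed $\lambda = \tfrac12$ and the constancy of both eigenvalues (Lemma~\ref{41}) into equations \eqref{curv1}--\eqref{curv3}. In \eqref{curv1} the factor $2\mu^2 - \tfrac12 = (\mu - \tfrac12)(2\mu+1)$ precisely matches the factor $\mu - \lambda = \mu - \tfrac12$ on the right-hand side, so cancelling that common nonzero factor gives \eqref{curv1a}. In \eqref{curv2} the left-hand side vanishes outright since $2\lambda^2 - \tfrac12 = 0$, and the remaining equation $(\tfrac12 - \mu)[V,U]^\mu \cdot \Psi = 0$ forces $[V,U]^\mu = 0$ (Clifford multiplication by a vector is injective on $\Psi$, since $|Z\cdot\Psi|^2 = |Z|^2$ by \eqref{xy}); thus $\T^\lambda$ is integrable and the putative second fundamental form $(V,W) \mapsto (\nabla_V W)^\mu$ is symmetric. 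In \eqref{curv3} the two surviving coefficients are $2\lambda\mu - \tfrac12 = \mu - \tfrac12$ and $\lambda - \mu = -(\mu - \tfrac12)$, and after using $X \perp V$ to replace $X \wedge V \cdot \Psi$ with $X \cdot V \cdot \Psi$, cancellation of $\mu - \tfrac12$ produces the key preliminary identity
\[
X \cdot V \cdot \Psi \;=\; -\bigl((\nabla_X V)^\mu + (\nabla_V X)^\lambda\bigr) \cdot \Psi, \qquad X \in \T^\mu,\ V \in \T^\lambda. \qquad (\star)
\]

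The heart of the argument is then to take the scalar product of $(\star)$ with $V \cdot \Psi$ for a unit $V \in \T^\lambda$. The left-hand side vanishes for free, because the skew-symmetry \eqref{skew} gives $\la X \cdot \Phi, \Phi\ra = 0$ for every spinor $\Phi$, applied here with $\Phi = V \cdot \Psi$. On the right-hand side, \eqref{xy} together with $|\Psi|=1$ and the skew-symmetry of Clifford multiplication with $2$-forms yield $\la Z \cdot \Psi, V \cdot \Psi\ra = g(Z,V)$ for every vector $Z$; since $(\nabla_X V)^\mu \in \T^\mu$ is orthogonal to $V \in \T^\lambda$, its contribution drops, and I am left with $g\bigl((\nabla_V X)^\lambda, V\bigr) = 0$, equivalently $g(\nabla_V X, V) = 0$. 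Differentiating the identity $g(X,V) = 0$ along $V$ then gives $g\bigl(X, (\nabla_V V)^\mu\bigr) = -g(\nabla_V X, V) = 0$ for every $X \in \T^\mu$, so $(\nabla_V V)^\mu = 0$ for every $V \in \T^\lambda$.

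From here everything is polarization and adjointness. Polarizing $(\nabla_V V)^\mu = 0$ in $V$ yields $(\nabla_V W + \nabla_W V)^\mu = 0$ for all $V, W \in \T^\lambda$, and combining with $(\nabla_V W - \nabla_W V)^\mu = [V,W]^\mu = 0$ gives $(\nabla_V W)^\mu = 0$, which is the total geodesicity of $\T^\lambda$. Differentiating $g(X,W) = 0$ along $V$ then provides the dual identity $g\bigl((\nabla_V X)^\lambda, W\bigr) = -g\bigl(X, (\nabla_V W)^\mu\bigr) = 0$ for all $W \in \T^\lambda$, so $(\nabla_V X)^\lambda = 0$ for every $X \in \T^\mu$ and $V \in \T^\lambda$; substituting this into $(\star)$ eliminates the second term on the right and produces \eqref{curv3a}. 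The one step I expect to be tricky to spot is the scalar product with $V \cdot \Psi$ that delivers $g\bigl((\nabla_V X)^\lambda, V\bigr) = 0$; everything after it is routine.
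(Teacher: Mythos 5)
Your proof is correct and follows essentially the same route as the paper: cancel the common factor $\mu-\tfrac12$ in \eqref{curv1} and \eqref{curv3}, use \eqref{curv2} to get $[V,U]^\mu=0$, take the scalar product of the reduced \eqref{curv3} with $V\cdot\Psi$ to obtain $g((\nabla_VX)^\lambda,V)=0$, polarize to conclude $\T^\lambda$ is totally geodesic, and then drop the $(\nabla_VX)^\lambda$ term to get \eqref{curv3a}. The step you flag as the key one (pairing with $V\cdot\Psi$) is exactly the paper's central observation.
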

\proof
We have $\lambda = \frac12$ and $\mu \neq \lambda$ constant. Equation \eqref{curv1a} thus follows directly from \eqref{curv1}.

Next, taking in (\ref{curv3})  the scalar product
with $V \cdot \Psi$, gives $0 = g((\nabla_VX)^\lambda , V) = - g(X, \nabla_VV) ,$
and by polarization $(\nabla_VU + \nabla_UV)^\mu $ vanishes for every vector fields $U,V$ in $\T^\lambda $. On the other hand, \eqref{curv2} implies $[V,U]^\mu =0$, so adding these two relations we obtain that $(\nabla_UV)^\mu =0$, i.e. $\T^\lambda $ is totally geodesic. 

In particular this can also be expressed by the fact that $(\nabla_VX)^\lambda $ vanishes for every $X\in\T^\mu $ and $V\in\T^\lambda $, so 
\eqref{curv3a} follows directly from \eqref{curv3}.
\qed

\begin{ere}\label{rem}
With  a  similar argument we get
$(\nabla_XY + \nabla_YX)^\lambda =0$ for all vectors $X,Y\in\T^\mu$. Thus the distribution  $\T^\mu $ would also be totally geodesic if integrable.
\end{ere}

\begin{ecor}\label{cliff}
For every $x\in\SM^n$ there is a  representation of the real Clifford algebra $\Cl(\T_x^\lambda )$ on   $\T_x^\mu $.
\end{ecor}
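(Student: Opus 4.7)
The plan is to use equation \eqref{curv3a} to construct a linear map $V\mapsto J_V$ from $\T_x^\lambda$ to $\End(\T_x^\mu)$ satisfying the Clifford relation $J_V^2=-|V|^2\id$, and then to invoke the universal property of $\Cl(\T_x^\lambda)$.

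First I would observe that Clifford multiplication by a non-zero vector on the non-zero spinor $\Psi(x)$ is injective, because $Z\cdot Z\cdot\Psi=-|Z|^2\Psi$. Applied to \eqref{curv3a}, this shows that $(\nabla_XV)^\mu\in\T_x^\mu$ at $x$ depends only on the pointwise values of $X\in\T_x^\mu$ and $V\in\T_x^\lambda$, and not on any extension of $V$. I can therefore define $J_V(X):=(\nabla_XV)^\mu$, equivalently characterized by the pointwise identity $J_V(X)\cdot\Psi=-X\cdot V\cdot\Psi$ at $x$; the assignment $V\mapsto J_V$ is manifestly linear, and the image is indeed in $\End(\T_x^\mu)$ by construction.

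The core algebraic step is to verify $J_V^2=-|V|^2\id$. Clifford-multiplying the defining identity on the left by $V$, using that $V$ anticommutes with both $X$ and $J_V(X)$ (both lying in $\T_x^\mu$, hence orthogonal to $V\in\T_x^\lambda$), and applying $V\cdot V=-|V|^2$, a second application of \eqref{curv3a} converts the two sides into $J_V^2(X)\cdot\Psi$ and $-|V|^2X\cdot\Psi$ respectively; injectivity then yields the desired Clifford relation. By the universal property of the Clifford algebra, the linear map $V\mapsto J_V$ extends uniquely to an algebra representation $\Cl(\T_x^\lambda)\to\End(\T_x^\mu)$, which is the statement of the corollary. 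I expect the only real subtlety to be the well-definedness in the first step; the remaining verification reduces to a short bookkeeping exercise with the Clifford anticommutation relations.
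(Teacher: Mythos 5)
Your proof is correct and follows essentially the same route as the paper: both define the endomorphism $V\mapsto(\nabla_XV)^\mu$ of $\T_x^\mu$, rewrite \eqref{curv3a} as a pointwise Clifford identity, and square it using anticommutation and injectivity of Clifford multiplication on the non-zero spinor $\Psi$ to get the relation $-|V|^2\id$. Your explicit remark on well-definedness (tensoriality in $V$) is a detail the paper leaves implicit, but it does not change the argument.
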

\proof
For $V\in\T_x^\lambda $ and $X\in\T _x^\mu $ we define
$$
\rho_V ( X)  := (\nabla_XV)^\mu .
$$
Then (\ref{curv3a}) can be re-written as $\rho_V(X)\cdot\Psi=V\cdot X\cdot \Psi$, whence
$$
(\rho_V \circ\rho_V( X)) \cdot \Psi  =   V\cdot \rho_V(X) \cdot \Psi=  V\cdot V\cdot X \cdot \Psi= -|V|^2X \cdot \Psi ,
$$
showing that $\rho_V \circ\rho_V=-|V|^2\id$. This proves the lemma.
\qed

\begin{elem}
The second eigenvalue of $A$ is $\mu = - \frac32$.
\end{elem}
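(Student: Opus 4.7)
The plan is to combine the algebraic identity \eqref{curv1a} with the Clifford module structure of Corollary \ref{cliff} and the scalar curvature constraint \eqref{three2} to obtain a cubic equation in $\mu$ whose only admissible root is $-3/2$.

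First I would note that the assignment $(X,Y)\mapsto T(X,Y):=[X,Y]^\lambda$ is tensorial in $X,Y\in\T^\mu$ (since $\T^\lambda$ is totally geodesic and $Y(f)X\in\T^\mu$). A short calculation using the skew-symmetry of each $\rho_V$ on $\T^\mu$, which follows from Remark \ref{rem} via
$g(\rho_V(X),Y)+g(\rho_V(Y),X)=-g(V,(\nabla_XY+\nabla_YX)^\lambda)=0,$
shows that
$$g(T(X,Y),V)\;=\;2\,g(X,\rho_V(Y))\qquad\forall\ X,Y\in\T^\mu,\ V\in\T^\lambda.$$
Taking norms in \eqref{curv1a} and using $|X\wedge Y\cdot\Psi|^2=|X|^2|Y|^2$ for orthogonal $X,Y$, one obtains for every pair of orthonormal vectors $X,Y\in\T_x^\mu$ the pointwise relation
$$4\sum_k g(X,\rho_{V_k}(Y))^2\;=\;|T(X,Y)|^2\;=\;(2\mu+1)^2,$$
where $\{V_k\}$ is an orthonormal basis of $\T_x^\lambda$.

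The next step is to fix a unit vector $Y\in\T_x^\mu$ and sum this identity over an orthonormal basis $\{X_i\}$ of $Y^\perp\cap\T_x^\mu$. Skew-symmetry of $\rho_{V_k}$ places $\rho_{V_k}(Y)$ in $Y^\perp\cap\T_x^\mu$, while the Clifford relation $\rho_{V_k}^2=-\id$ from Corollary \ref{cliff} gives $|\rho_{V_k}(Y)|^2=1$; summing over $k$ therefore yields the dimension-counting identity
\beq\label{plan1}(q-1)(2\mu+1)^2\;=\;4p,\eeq
with $p:=\dim\T^\lambda$ and $q:=\dim\T^\mu$. On the other hand, inserting $\lambda=1/2$, $\tr A=p/2+q\mu$ and $\tr A^2=p/4+q\mu^2$ into \eqref{three2} (using $\scal=n(n-1)$ on $\SM^n$) gives after simplification
\beq\label{plan2}4(q-1)\mu^2+4p\mu\;=\;2p+q-1.\eeq

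Finally, I would substitute \eqref{plan1} into \eqref{plan2}, which after elementary algebra factors as $(2\mu-1)(2\mu+1)(2\mu+3)=0$. The root $\mu=1/2$ is excluded because $\mu\neq\lambda$; the root $\mu=-1/2$ would reduce \eqref{curv1a} to $T\equiv 0$, making $\T^\mu$ integrable and hence totally geodesic by Remark \ref{rem}, which combined with $\T^\lambda$ also totally geodesic would force $\SM^n$ to locally split as a Riemannian product---a contradiction. Only $\mu=-3/2$ remains. The main obstacle is arriving at the sharp dimensional identity \eqref{plan1}: one has to recognize that the norm estimate obtained from \eqref{curv1a} becomes tight precisely when the Clifford square relation $\rho_V^2=-|V|^2\id$ is brought in to evaluate $\sum_k|\rho_{V_k}(Y)|^2=p$, which is what ultimately pins down the ratio $p/(q-1)$ and hence the value of $\mu$.
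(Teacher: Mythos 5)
Your argument is correct, but it reaches $\mu=-\tfrac32$ by a genuinely different route than the paper. The paper's proof is purely pointwise spinor algebra: it pairs \eqref{curv1a} against $V\cdot\Psi$, uses \eqref{curv3a} to convert the result into $g([X,Y],V)=-(2\mu+1)g(\nabla_XY,V)$, and antisymmetrizes in $X,Y$ to get $(2\mu+3)[X,Y]^\lambda=0$ directly; the only further input is the parallel-distributions contradiction, and notably \eqref{three2} is never used at this stage (it enters only later, in Lemma \ref{mult}). You instead take norms in \eqref{curv1a}, convert $|[X,Y]^\lambda|^2$ into $4\sum_k g(X,\rho_{V_k}(Y))^2$ via the skew-symmetry of $\rho_V$ (which you correctly extract from Remark \ref{rem}), and use the Clifford relation $\rho_V^2=-|V|^2\id$ of Corollary \ref{cliff} to sum this up into the dimensional identity $(q-1)(2\mu+1)^2=4p$; coupling it with the trace constraint \eqref{three2} then gives the cubic $(2\mu-1)(2\mu+1)(2\mu+3)=0$, whose spurious roots you eliminate exactly as the paper eliminates $\mu\neq-\tfrac32$ (the root $\mu=-\tfrac12$ forces $\T^\mu$ integrable, hence totally geodesic by Remark \ref{rem}, hence a local Riemannian splitting of the sphere). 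What your route buys is a bonus: once $\mu=-\tfrac32$ is fixed, your identity $(q-1)(2\mu+1)^2=4p$ immediately yields $p=q-1$, so Lemma \ref{mult} comes for free; what it costs is length and one hypothesis you should make explicit, namely $q\ge 2$ (needed to have an orthonormal pair $X,Y\in\T^\mu_x$ and to divide by $q-1$). This is harmless, since Corollary \ref{cliff} with $p\ge 1$ makes each $\rho_V$, $|V|=1$, a skew-symmetric complex structure on $\T^\mu_x$, which is impossible on a one-dimensional space, but a sentence to that effect belongs in the proof.
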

\proof
Taking  in (\ref{curv1a}) the scalar product with $V\cdot \Psi$ and applying (\ref{curv3a}), gives
\bea
g([X,Y], V) & = & - (2\mu + 1) \la V\cdot X \cdot Y \cdot \Psi, \Psi \ra = - (2\mu +1) \la X\cdot V \cdot \Psi , Y\cdot \Psi\ra  \\
&= & - (2\mu+1) g(\nabla_XY, V)
\eea
This equation can be rewritten as $g((2\mu + 2) \nabla_XY - \nabla_YX, V) = 0$. Interchanging $X$ and $Y$ 
and subtracting the resulting equations we obtain
$(2\mu + 3) [X,Y]^\lambda =0$. 

If $\mu \neq -\frac32$, 
the distribution $\T^\mu $ is totally geodesic (see Remark \ref{rem}),
and since $\T^\mu $ is also totally geodesic, both distributions would be parallel, which is of course impossible on $\SM^n$.
\qed

\begin{elem}\label{mult}
The multiplicities $p$ and $q$ of $\lambda$ and $\mu$ are related by $q= p+1$.
\end{elem}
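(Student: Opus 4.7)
The plan is to exploit the algebraic constraint \eqref{elm} derived in Lemma \ref{41}, which expresses the scalar curvature of $\SM^n$ in terms of the multiplicities and eigenvalues:
\[
(p\lambda + q \mu)^2 - (p\lambda^2 + q \mu^2) = \tfrac14 n(n-1).
\]
By the previous two lemmas the eigenvalues are pinned down to $\lambda = \tfrac12$ and $\mu = -\tfrac32$, so after substituting these values and $n = p+q$ the constraint becomes a purely numerical identity in the two unknown multiplicities.

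Concretely, I would compute $p\lambda + q\mu = \tfrac{p-3q}{2}$ and $p\lambda^2 + q\mu^2 = \tfrac{p+9q}{4}$, and equate
\[
\frac{(p-3q)^2}{4} - \frac{p+9q}{4} = \frac{(p+q)(p+q-1)}{4}.
\]
Multiplying by $4$ and expanding both sides, the $p^2$ and $-p$ terms cancel and one is left with
\[
-8pq + 8q^2 - 8q = 0,
\]
which factors as $8q(q - p - 1) = 0$. Since $\T^\mu$ is a non-trivial distribution, $q \geq 1$, and we conclude $q = p+1$.

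There is no real obstacle here: the dynamical content was all absorbed into pinning down $\lambda$ and $\mu$ in the previous lemmas. The only care needed is to ensure that the scalar curvature identity \eqref{elm} is still valid — it is, because it was derived directly from \eqref{three2} on the whole of $\SM^n$ under the two-eigenvalue assumption, and by Lemma \ref{41} the multiplicities $p$ and $q$ are globally constant.
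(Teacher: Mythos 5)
Your proposal is correct and follows exactly the paper's own argument: substitute $\lambda=\tfrac12$, $\mu=-\tfrac32$ and $n=p+q$ into the constraint \eqref{three2} (equivalently \eqref{elm}) and simplify, the expansion indeed reducing to $8q(q-p-1)=0$ and hence $q=p+1$ since $q\ge 1$. Your added remark that \eqref{elm} holds globally because the eigenvalues and multiplicities are constant by Lemma \ref{41} is a sound (if implicit in the paper) justification.
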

\proof 
Introducing the values $\lambda = \frac12$ and $\mu = -\frac32$ in  (\ref{three2}) we obtain the equation
$$
\tfrac14 n(n-1) = a^2 - \tr A^2 =  (\tfrac{p}{2} - \tfrac{3q}{2})^2 -\tfrac{p}{4} - \tfrac{9q}{4} .
$$
Substituting $n = p+q$ immediately leads to  $p=q-1$.
\qed

\begin{ecor}
The pair $(p,q)$ of multiplicities of $\lambda$ and $\mu$ is one of $(1,2), (3,4)$ or $(7,8)$.
\end{ecor}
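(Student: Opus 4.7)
The plan is to invoke Corollary \ref{cliff} together with the classification of real Clifford modules. By Lemma \ref{mult} we have $\dim \T_x^\mu = q = p+1$, while by Corollary \ref{cliff} the space $\T_x^\mu$ carries the structure of a real module over $\Cl(\T_x^\lambda) \cong \Cl_p$, where generators satisfy $V\cdot V = -|V|^2$. So the problem reduces to the purely algebraic question: for which $p \ge 1$ does $\Cl_p$ admit a real representation of dimension exactly $p+1$?

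Any real representation of $\Cl_p$ decomposes as a direct sum of irreducible ones, each of the minimal dimension $d_p$ (in the non-simple cases $p \equiv 3 \pmod 4$ there are two non-isomorphic irreducibles, but they share the same dimension, cf.\ \cite{lm}). Hence the question becomes: for which $p \ge 1$ does $d_p$ divide $p+1$?

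I would then appeal to the standard table, together with the Bott-type periodicity: $d_1 = 2$, $d_2 = d_3 = 4$, $d_4 = d_5 = d_6 = d_7 = 8$, $d_8 = 16$, with $d_{p+8} = 16\, d_p$. A direct inspection inside the first period yields solutions exactly at $p \in \{1,3,7\}$, giving the three pairs $(1,2)$, $(3,4)$, $(7,8)$. For $p \ge 8$ the periodicity forces $d_p \ge 16$, and the exponential growth of $d_p$ against the linear growth of $p+1$ rules out $d_p \le p+1$, let alone divisibility.

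The main effort is therefore purely bookkeeping: recalling the Clifford algebra table in one period, and checking the exponential-versus-linear inequality for $p \ge 8$. No additional geometry is required beyond Corollary \ref{cliff}; I expect no substantive obstacle.
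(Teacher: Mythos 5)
Your argument is exactly the paper's: combine Corollary \ref{cliff} and Lemma \ref{mult} to get a real $\Cl_p$-module structure on $\RM^{p+1}$, then use the classification of real Clifford algebras to conclude $p\in\{1,3,7\}$. You simply spell out the divisibility-by-$d_p$ bookkeeping that the paper leaves implicit, and your table and periodicity argument are correct.
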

\proof By Corollary \ref{cliff} and Lemma \ref{mult}, there exists a $\Cl_p$ representation
on $\RM^{p+1}$. From the classification of real Clifford algebras
(cf. \cite{lm}), this can only happen when $p$ is $1$, $3$ or $7$.
\qed

We thus see that a generalized Killing spinor whose associated endomorphism has two eigenvalues can only exist on $\SM^n$ for $n=3$, $n=7$ or $n=15$.  We will now further investigate the geometry determined by $\Psi$ and at the end we will consider these three cases separately.

For every $V\in\T^\lambda $ consider the skew-symmetric endomorphism  $\rho_V$ of $\T^\mu $ defined above by $\rho_V(X):=-(\nabla_XV)^\mu $. Equation \eqref{curv3a} then reads 
\beq X \cdot V \cdot \Psi   = \rho_V(X)   \cdot \Psi \label{curv3b},\qquad\forall\  X\in\T^\mu ,\ \forall\  V\in\T^\lambda  .\eeq
For every $U,V\in\T^\lambda $ with $g(U,V)=0$ we pick some arbitrary vector $X\in\T^\mu $ with $|X|=1$ and write using \eqref{curv1a} and \eqref{curv3b}:
\bea U\cdot V\cdot\Psi&=&(X\cdot U)\cdot(X\cdot V)\cdot\Psi=(X\cdot U)\cdot\rho_{V}(X)\cdot\Psi=\rho_{V}(X)\cdot(X\cdot U)\cdot\Psi\\
&=&\rho_{V}(X)\cdot\rho_{U}(X)\cdot\Psi\in\T^\lambda \cdot\Psi.\eea
This shows that $\Lambda^2\T^\lambda \cdot\Psi\subset\T^\lambda \cdot\Psi$. Moreover, this also shows that for every $X\in\T^\mu $ and $U,V\in\T^\lambda $
\beq\label{sc} \la U\cdot V\cdot\Psi,X\cdot\Psi\ra=0.
\eeq

\begin{elem}
The sub-bundle $\T^\lambda \cdot\Psi$ of $\Sigma\SM^n$ is parallel with respect to the modified connection $\tilde\nabla_X:=\nabla_X-\frac12 X\cdot$. 
\end{elem}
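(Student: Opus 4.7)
The plan is to pick any local section $V$ of the distribution $\T^\lambda$ and check that at each point $\tilde\nabla_X(V\cdot\Psi)$ lies in $\T^\lambda\cdot\Psi$ for every tangent vector $X$. Applying the Leibniz rule together with the generalized Killing equation yields
\[
\tilde\nabla_X(V\cdot\Psi) \;=\; (\nabla_X V)\cdot\Psi \;+\; V\cdot A(X)\cdot\Psi \;-\; \tfrac12 X\cdot V\cdot\Psi,
\]
so the task reduces to rewriting the right-hand side as a sum of terms of the form $W\cdot\Psi$ with $W\in\T^\lambda$.

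Next, I would decompose $X=X^\lambda+X^\mu$ along the two eigendistributions and use $A(X^\lambda)=\tfrac12 X^\lambda$, $A(X^\mu)=-\tfrac32 X^\mu$. For the $X^\lambda$ part the Clifford identity $V\cdot Y-Y\cdot V=2\,V\wedge Y$ combines the $\tfrac12 V\cdot X^\lambda$ coming from $V\cdot A(X)$ with the $-\tfrac12 X^\lambda\cdot V$ from the twist term to produce a 2-form contribution $V\wedge X^\lambda$. For the $X^\mu$ part the orthogonality $g(V,X^\mu)=0$ gives $V\cdot X^\mu=-X^\mu\cdot V$, so the coefficients $-\tfrac32$ and $-\tfrac12$ collapse into a single $+X^\mu\cdot V$. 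The upshot is
\[
\tilde\nabla_X(V\cdot\Psi) \;=\; (\nabla_X V)\cdot\Psi \;+\; (V\wedge X^\lambda)\cdot\Psi \;+\; X^\mu\cdot V\cdot\Psi.
\]

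The final step is to dispose of each of these three pieces using results already established. The total geodesicness of $\T^\lambda$ guarantees $\nabla_{X^\lambda}V\in\T^\lambda$; for the $X^\mu$ part, the formula \eqref{curv3a} asserts $X^\mu\cdot V\cdot\Psi=-(\nabla_{X^\mu}V)^\mu\cdot\Psi$, which precisely cancels the $\T^\mu$-component of $(\nabla_{X^\mu}V)\cdot\Psi$, leaving only $(\nabla_{X^\mu}V)^\lambda\cdot\Psi\in\T^\lambda\cdot\Psi$. The remaining term $V\wedge X^\lambda$ lies in $\Lambda^2\T^\lambda$, and the inclusion $\Lambda^2\T^\lambda\cdot\Psi\subset\T^\lambda\cdot\Psi$ recorded in the paragraph immediately preceding the statement places this contribution in the desired sub-bundle as well.

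I do not foresee a substantive obstacle: the whole argument is a direct algebraic manipulation in which every term produced has already been shown to sit in $\T^\lambda\cdot\Psi$. The one point demanding care is the sign convention for $\rho_V$ (set one way in Corollary \ref{cliff} and the opposite way in the subsequent text), but since the computation can be carried out using \eqref{curv3a} itself, this ambiguity is easily sidestepped.
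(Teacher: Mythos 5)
Your argument is correct and is essentially the paper's own proof: the same Leibniz-rule computation with $A(X^\lambda)=\tfrac12X^\lambda$, $A(X^\mu)=-\tfrac32X^\mu$, closed up using \eqref{curv3a}, the total geodesy of $\T^\lambda$, and the inclusion $\Lambda^2\T^\lambda\cdot\Psi\subset\T^\lambda\cdot\Psi$, the only difference being that you treat a general $X=X^\lambda+X^\mu$ in one computation while the paper splits into the cases $X\in\T^\mu$ and $X\in\T^\lambda$. Your remark about the sign of $\rho_V$ is well taken, and working directly from \eqref{curv3a} as you do indeed avoids the issue.
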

\begin{proof}
For $X\in\T^\mu $  and $V\in\T^\lambda $ we have
\bea (\n_X-\tfrac12X\cdot )(V\cdot\Psi)&=& (\n_XV)\cdot\Psi+V\cdot\A(X)\cdot\Psi-\tfrac12X\cdot V\cdot\Psi\\&=& (\n_XV)\cdot\Psi-\tfrac32V\cdot X\cdot\Psi-\tfrac12X\cdot V\cdot\Psi\\
&=&(\n_XV)\cdot\Psi-V\cdot X\cdot\Psi=(\n_XV)\cdot\Psi+\rho_V(X)\cdot\Psi\\
&=&(\n_XV)^\lambda \cdot\Psi\in \T^\lambda \cdot\Psi,
\eea
and for $U,V\in\T^\lambda $, keeping in mind that $\T^\lambda $ is totally geodesic and that $\Lambda^2\T^\lambda \cdot\Psi\subset\T^\lambda \cdot\Psi$:
\bea (\n_U-\tfrac12U\cdot )(V\cdot\Psi)&=& (\n_UV)\cdot\Psi+V\cdot\A(U)\cdot\Psi-\tfrac12U\cdot V\cdot\Psi\\&=& 
 (\n_UV)\cdot\Psi+\tfrac 12V\cdot U\cdot\Psi-\tfrac12U\cdot V\cdot\Psi\\&=&  (\n_UV)\cdot\Psi+V\wedge U\cdot\Psi
\in \T^\lambda \cdot\Psi.
\eea
\end{proof}

Since $\tilde\nabla$ is flat on $\Sigma\SM^n$, it follows that $\T^\lambda \cdot\Psi$ can be trivialized with $\tilde\nabla$-parallel (i.e. Killing) spinors.
We denote by $\mathcal{K}$ the $p$-dimensional vector space of Killing spinors on $\SM^n$ obtained in this way.
By definition, for every $\Phi\in \mathcal{K}$, there exists a vector field $\xi_\Phi\in\T^\lambda $ satisfying $\xi_\Phi\cdot\Psi=\Phi$. Clearly $\la\Psi,\Phi \ra=0$, and as $\Psi$ has unit norm, $|\xi_\Phi|^2=|\Phi|^2$.
For every tangent vector $X$ we have $g(\xi_\Phi,X)=\la X\cdot\Psi,\Phi\ra$. Using the obvious fact that $A(X)^\lambda =\frac12 X^\lambda $ and $A(X)^\mu =-\frac32 X^\mu $, we compute using \eqref{sc}:
\bea g(\nabla_X\xi_\Phi,X)&=&\la X\cdot\nabla_X\Psi,\Phi\ra+\la X\cdot\Psi,\nabla_X\Phi\ra=\la X\cdot A(X)\cdot\Psi,\Phi\ra+\tfrac12\la X\cdot\Psi,X\cdot\Phi\ra\\
&=&\la X\cdot A(X)\cdot\Psi,\Phi\ra=\la (X^\mu +X^\lambda )\cdot(\tfrac12X^\mu -\tfrac32X^\lambda )\cdot\Psi,\xi_\Phi\cdot\Psi\ra\\
&=&-\tfrac32\la X^\mu \cdot X^\lambda \cdot\Psi,\xi_\Phi\cdot\Psi\ra+\tfrac12\la X^\lambda \cdot X^\mu \cdot\Psi,\xi_\Phi\cdot\Psi\ra=0.
\eea
This shows that $\xi_\Phi$ is a Killing vector field on $\SM^n$ for every Killing spinor $\Phi\in \mathcal{K}$. There exists thus a linear map $F$ from $\mathcal{K}$ to $\Lambda^2\RM^{n+1}$ which associates to each $\Phi\in\mathcal{K}$ a skew-symmetric matrix $F_\Phi\in\Lambda^2\RM^{n+1}$ such that $(\xi_\Phi)_x=F_\Phi(x)$ for every $x\in\SM^n\subset\RM^{n+1}$. In fact $F_\Phi$ is related to the covariant derivative of $\xi_\Phi$ by
\beq\label{fxi}\nabla_X\xi_\Phi=F_\Phi(X),\qquad\forall\  X\in \T\SM^n.
\eeq
As $|\xi_\Phi|^2=|\Phi|^2$, we obtain $(F_\Phi)^2=-|\Phi|^2\id_{\RM^{n+1}}$. 
If we choose now an orthonormal basis $\Phi_1,\ldots,\Phi_p$ of $\mathcal{K}$, and denote by $F_i:=F_{\Phi_i}$ for simplicity, the previous relation becomes
\beq\label{sas}(F_i)^2=-\id,\qquad F_i\circ F_j+F_j\circ F_i=0\ \hbox{for }i\ne j.\eeq
We now consider the three cases above separately.

{\bf The case $n=3$.} In this case the distribution $\T^\lambda $ is $1$-dimensional, and the unit vector field generating it (unique up to a sign) is Killing. The symmetric tensor $A$ thus coincides with the one defined in \cite[Sect. 4.2]{gkse}. Of course, the space of generalized Killing spinors with respect to this tensor $A$ is 4-dimensional, since the spin representation in dimension 3 has a quaternionic structure.

{\bf The case $n=7$.} We have seen that $\{\xi_1,\xi_2,\xi_3\}$ is an orthonormal basis of $\T^\lambda $ at each point consisting of unit Killing vector fields. It is well known that every unit Killing vector field on the round sphere is Sasakian. The relation \eqref{sas} just tells that the triple $\{\xi_1,\xi_2,\xi_3\}$ defines a 3-Sasakian structure.

We remark that the spinor $\Psi$ is exactly the {\em canonical spinor} constructed by Agricola and Friedrich \cite{af10} on any 3-Sasakian manifold of dimension 7.

{\bf The case $n=15$.} It would have been interesting to obtain examples of generalized Killing spinors with two eigenvalues on $\SM^{15}$ similar to those constructed above in dimension 3 and 7. Unfortunately this turns out to be impossible. 

Assuming the existence of such a spinor $\Psi$, we would obtain from the construction above an orthonormal set of Killing vector fields $\xi_1,\ldots,\xi_7$ on $\SM^{15}$ whose defining endomorphisms $F_i\in\Lambda^2\RM^{16}$ satisfy \eqref{sas}. This shows that there exists a representation of the real Clifford algebra $\Cl_7$ on $\RM^{16}$ such that $F_i(x)=e_i\cdot x$ for every $x\in\RM^{16}$ and $1\le i\le 7$. By definition of $F_i$ we thus have $(\xi_i)_x=e_i\cdot x$ for every $x\in\SM^{15}$ and $1\le i\le 7$. As $\Cl_7=\RM(8)\oplus\RM(8)$, this representation decomposes in a direct sum $\RM^{16}=\Sigma_1\oplus\Sigma_2$ of two 8-dimensional representations of $\Cl_7$. Each $x_i\in\Sigma_i$ ($i\in\{1,2\}$) defines a vector cross product $P_{x_i}$ on $\RM^7$ by the formula $(u\wedge v)\cdot x_i=P_{x_i}(u,v)\cdot x_i$. 

Using \eqref{fxi} we can write for every $x=(x_1,x_2)\in \SM^{15}$ and 
$i\ne j\in\{1,\ldots,7\}$:
\bea(\nabla_{\xi_i}\xi_j)_x&=&F_j(\xi_i)_x=F_j(F_i(x))=e_j\cdot e_i\cdot x=(e_j\wedge e_i\cdot x_1,e_j\wedge e_i\cdot x_2)\\
&=&(P_{x_1}(e_j,e_i)\cdot x_1,P_{x_2}(e_j,e_i)\cdot x_2).\eea
Recall now that $\xi_1,\ldots,\xi_7$ span a totally geodesic distribution on $\SM^{15}$. This implies that there exist functions $f_1,\ldots,f_7$ on $\SM^{15}$ such that 
$$(\nabla_{\xi_i}\xi_j)_x=\sum_{k=1}^7f_k(x)(\xi_k)_x=\sum_{k=1}^7f_k(x)F_k(x)=\sum_{k=1}^7f_k(x)e_k\cdot x=\sum_{k=1}^7f_k(x)(e_k \cdot x_1, e_k \cdot x_2).$$
Comparing these last two equations yields $P_{x_1}(e_j,e_i)=P_{x_2}(e_j,e_i)$ for every $(x_1,x_2)\in\SM^{15}\subset\RM^{16}$ and for every 
$i\ne j\in\{1,\ldots,7\}$ . This implies that the vector cross product $P_x$ is independent of $x$, which is of course a contradiction. There are thus no solutions on the sphere $\SM^{15}$.

We have proved the following 

\begin{ath}\label{2eig} Let $\Psi$ be a generalized Killing spinor on the sphere $\SM^n$ whose associated symmetric endomorphism
$A$ has at most two eigenvalues $\lambda$ and $\mu$ at each point. Then $\lambda$ and $\mu$ are both constant. If $\lambda=\mu$, then $A=\pm\frac12\id$ and $\Psi$ is a Killing spinor. If $\lambda\ne\mu$, then up to a permutation of $\lambda$ and $\mu$ and a change of orientation one has $\lambda=\frac12$, $\mu=-\frac32$ and $n=3$ or $n=7$. 
\begin{itemize}
\item If $n=3$, the $\frac12$-eigenspace of $A$ is spanned by a unit left-invariant Killing vector field $\xi$ on $\SM^3$ and $\Psi=\xi\cdot\Phi$ for some Killing spinor $\Phi$ with constant $\frac12$.  
\item If $n=7$, the $\frac12$-eigenspace of $A$ is spanned by three Killing vector fields $\xi_1,\xi_2,\xi_3$ defining a $3$-Sasakian structure on $\SM^7$ and $\Psi$ is the canonical spinor of the $3$-Sasakian structure introduced in \cite{af10}.
\end{itemize}
\end{ath}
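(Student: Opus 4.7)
The plan is to assemble the lemmas and corollaries of Section \ref{sect4} into the statement; the proof is essentially a synthesis, and only the degenerate case and the explicit identification of $\Psi$ in dimensions $3$ and $7$ need to be added to the material already established above.

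First I would invoke Lemma \ref{41} for the constancy of $\lambda$ and $\mu$. If $\lambda = \mu$, then $A = \lambda \id$ everywhere and the scalar curvature constraint \eqref{three2} collapses to $4\lambda^2 n(n-1) = n(n-1)$, forcing $\lambda = \pm \frac{1}{2}$ and hence an ordinary Killing spinor. Suppose therefore $\lambda \ne \mu$. Lemma \ref{12} allows me to assume $\lambda = \frac{1}{2}$ after possibly swapping the two eigenvalues and changing orientation; the subsequent lemmas then pin $\mu = -\frac{3}{2}$ and impose $q = p+1$ on the multiplicities; and the ensuing corollary, which combines Corollary \ref{cliff} with the classification of real Clifford algebras, confines $(p,q)$ to $(1,2)$, $(3,4)$ or $(7,8)$, i.e.\ $n\in\{3,7,15\}$.

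It remains to describe $\Psi$ concretely in each surviving dimension. In all three cases the $\tilde\nabla$-parallel trivialization of $\T^\lambda\cdot\Psi$ carried out above produces a $p$-dimensional space $\mathcal{K}$ of Killing spinors and a corresponding orthonormal frame $\xi_1,\ldots,\xi_p$ of $\T^\lambda$ made of unit Killing vector fields satisfying the Clifford-type identities \eqref{sas}. For $n=3$, $\xi_1$ is a single unit left-invariant Killing vector field on $\SM^3$ and the defining relation $\Psi = \xi_1\cdot\Phi_1$ identifies $\Psi$ with the example from \cite[Sect.~4.2]{gkse}. For $n=7$, every unit Killing vector field on $\SM^7$ is automatically Sasakian, so $(\xi_1,\xi_2,\xi_3)$ together with \eqref{sas} defines a $3$-Sasakian structure on $\SM^7$ and $\Psi$ is the Agricola--Friedrich canonical spinor of \cite{af10}.

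The main obstacle, and the only step that is not pure bookkeeping, is ruling out $n=15$. This is exactly the Clifford-theoretic contradiction established above: the identities \eqref{sas} promote $(\xi_1,\ldots,\xi_7)$ to a representation of $\Cl_7$ on $\RM^{16}$, which by $\Cl_7 \cong \RM(8)\oplus\RM(8)$ splits as $\RM^{16} = \Sigma_1\oplus\Sigma_2$, each summand carrying a vector cross product $P_{x_i}$ on $\RM^7$. Total geodesicity of $\T^\lambda$ forces $\nabla_{\xi_i}\xi_j$ to be pointwise a real combination of the $\xi_k$; rewriting this via the ambient Clifford action on $\Sigma_1\oplus\Sigma_2$ forces $P_{x_1}(e_j,e_i)=P_{x_2}(e_j,e_i)$ for every $(x_1,x_2)\in\SM^{15}$ and every $i\ne j$, making $P$ independent of $x$, which contradicts the non-triviality of the vector cross product on $\RM^7$. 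Thus no generalized Killing spinor of the required form exists on $\SM^{15}$, completing the proof.
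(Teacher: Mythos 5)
Your proposal is correct and follows essentially the same route as the paper: Theorem \ref{2eig} is indeed proved there by assembling Lemmas \ref{41}--\ref{mult} and Corollary \ref{cliff} with the classification of real Clifford algebras, and by excluding $n=15$ via the $\Cl_7$-representation on $\RM^{16}$ and the resulting contradiction with the $x$-dependence of the vector cross product. Your handling of the degenerate case $\lambda=\mu$ via \eqref{three2} is a slightly more explicit version of the paper's remark, but otherwise the arguments coincide.
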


\section{Deformations of generalized Killing spinors}

In this section we study the deformation problem for generalized Killing spinors on spheres, and show in particular that Killing spinors are rigid, in the sense that they cannot be deformed into generalized Killing spinors.

For every spin manifold $(M,g)$, the set $\mathcal{GK}(M,g)$ of generalized Killing spinors is a Fr\'echet manifold. On the round sphere $\SM^n$, the (finite dimensional) vector spaces $\mathcal{K}_{\frac12}(\SM^n)$ and $\mathcal{K}_{-\frac12}(\SM^n)$ consisting of Killing spinors with Killing constants $\pm\tfrac12$ respectively, are Fr\'echet submanifolds of $\mathcal{GK}(\SM^n)$.

\begin{ath}\label{rigid} The submanifolds $\mathcal{K}_{\pm\frac12}(\SM^n)$ are connected components of $\mathcal{GK}(\SM^n)$.
\end{ath}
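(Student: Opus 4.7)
To prove the theorem I plan to show that $\mathcal{K}_{\pm\frac12}(\SM^n)$ is open in $\mathcal{GK}(\SM^n)$; combined with the closedness of the condition $A=\pm\tfrac12\id$, this identifies $\mathcal{K}_{\pm\frac12}(\SM^n)$ with a union of connected components. By symmetry I would treat only $\mathcal{K}_{\frac12}$. Let $\Psi_t$ be a smooth path in $\mathcal{GK}(\SM^n)$ with $\Psi_0\in\mathcal{K}_{\frac12}(\SM^n)$ (so $A_0=\tfrac12\id$), and denote by $\dot\Psi,\dot A$ the $t$-derivatives at $0$. The linearized generalized Killing equation is $\nabla_X\dot\Psi-\tfrac12 X\cdot\dot\Psi=\dot A(X)\cdot\Psi_0$, so the task reduces to proving $\dot A\equiv 0$: this then says $\dot\Psi$ is itself a Killing spinor of constant $\tfrac12$, establishing $T_{\Psi_0}\mathcal{GK}(\SM^n)\subseteq T_{\Psi_0}\mathcal{K}_{\frac12}(\SM^n)$.

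Linearizing \eqref{gks}, \eqref{three1}, \eqref{three2} and \eqref{curv} at $t=0$, I would first observe that the constraints force $\tr\dot A=0$ and $\delta^\nabla\dot A=0$, so $\dot A$ is trace-free and divergence-free; moreover, the $\dot\Psi$-contributions in the integrability equation cancel because $\mathcal{R}=-\id$ on the round sphere, leaving the spinorial identity
\[[(\nabla_X\dot A)Y-(\nabla_Y\dot A)X]\cdot\Psi_0 \;=\; [\dot A(X)\wedge Y + X\wedge\dot A(Y)]\cdot\Psi_0. \quad(\star)\]
Setting $X=e_i$ in $(\star)$, Clifford-multiplying from the left by $e_i$ and summing over an orthonormal frame, the TT conditions on $\dot A$ together with routine Clifford identities (in particular $\sum_i e_i\cdot B(e_i)=-\tr B$ for symmetric $B$) collapse $(\star)$ to the pointwise tensorial identity
\[[\Omega(Y)+(n-2)\,\dot A(Y)]\cdot\Psi_0 = 0 \quad(\dagger)\]
for every $Y\in T\SM^n$, where $\Omega(Y)$ denotes the 2-form $(X,Z)\mapsto(\nabla_X\dot A)(Z,Y)-(\nabla_Z\dot A)(X,Y)$, which depends tensorially on~$Y$.

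To conclude $\dot A\equiv 0$ I would apply a Bochner-Weitzenb\"ock argument on TT tensors. Take the pointwise squared norm of $(\dagger)$, sum over $Y$ in an orthonormal basis, and integrate over $\SM^n$: the resulting identity combines $(n-2)^2\int|\dot A|^2$, a nonnegative contribution $\sum_Y\int|\Omega(Y)\cdot\Psi_0|^2$, and a cross term $2(n-2)\sum_Y\int\la\Omega(Y)\cdot\Psi_0,\dot A(Y)\cdot\Psi_0\ra$. The Killing spinor identity $\nabla_X\Psi_0=\tfrac12 X\cdot\Psi_0$ together with the identity $\sum_i e_i\cdot\omega\cdot e_i=(n-2)\omega$ for 1-forms allows the cross term to be integrated by parts and re-expressed in $\int|\nabla\dot A|^2$ and $\int|\dot A|^2$. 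Combined with the standard Weitzenb\"ock identity $\nabla^*\nabla\dot A=\Delta_L\dot A-2n\,\dot A$ for TT tensors on $\SM^n$ and the strict positivity of the Lichnerowicz Laplacian on nonzero TT tensors on the round sphere, this produces a Bochner inequality of the form $c_n\int|\dot A|^2\le 0$ with $c_n>0$, forcing $\dot A\equiv 0$. This parallels the Bochner proof of Einstein rigidity of the round sphere.

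The main obstacle is the explicit computation of the cross term: $\Omega(Y)$ is a 2-form involving $\nabla\dot A$, while $\dot A(Y)$ is a vector, so their Clifford pairing against $\Psi_0$ mixes actions of distinct degrees and does not vanish pointwise in odd dimensions. Controlling this pairing requires systematic integration by parts against the Killing spinor $\Psi_0$, repeatedly invoking \eqref{skew}, \eqref{2f} and the TT conditions on $\dot A$, and constitutes the technical heart of the argument linking the spinorial identity $(\dagger)$ to the Einstein-type rigidity of the round sphere.
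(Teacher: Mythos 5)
Your setup is the same as the paper's: linearize \eqref{gks} along a path starting at a Killing spinor, use \eqref{three1}--\eqref{three2} to get $\tr\dot A=0$ and $\delta^\nabla\dot A=0$, and derive from the differentiated equation the identity $(\star)$, which is exactly the paper's linearized curvature equation. The problem is the step from $(\star)$ to $\dot A=0$, and here your proposal has a genuine gap. First, the decisive computation (the control of the cross term $\sum_Y\la\Omega(Y)\cdot\Psi_0,\dot A(Y)\cdot\Psi_0\ra$, which you correctly observe involves a degree-three Clifford pairing that does not vanish pointwise) is not carried out; you explicitly defer it as ``the technical heart'', so the argument is incomplete precisely where the work lies. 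Second, and more seriously, the route through the contracted identity $(\dagger)$ appears to throw away exactly the information that makes the proof work. The paper never contracts: it compares norms directly in $(\star)$, using that $|V\cdot\Psi_0|=|V|$ for vectors while $|\omega\cdot\Psi_0|\le|\omega|$ for $2$-forms, to get the pointwise \emph{upper} bound $|d^\nabla\dot A|^2\le(n-2)|\dot A|^2$; this contradicts the integrated Bourguignon--Weitzenb\"ock identity $\int_{\SM^n}|d^\nabla\dot A|^2=\int_{\SM^n}\bigl(|\nabla\dot A|^2+n|\dot A|^2\bigr)$ (valid since $\dot A$ is trace-free and divergence-free) unless $\dot A=0$. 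From $(\dagger)$ alone, the same norm estimates only give $|\Omega(Y)\cdot\Psi_0|=(n-2)|\dot A(Y)|\le|\Omega(Y)|$, i.e.\ the \emph{lower} bound $(n-2)^2|\dot A|^2\le|d^\nabla\dot A|^2$, which points in the same direction as the Weitzenb\"ock identity and is satisfied by plenty of nonzero TT tensors; no contradiction can come out of combining these two, and the positivity $\Delta_L\ge 2n$ on TT tensors is likewise already contained in the Weitzenb\"ock identity and adds nothing.

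Concretely, to repair the argument you should not Clifford-contract $(\star)$. Keep it as an identity between a vector and a $2$-form acting on the unit spinor $\Psi_0$, take norms for $X=e_i$, $Y=e_j$ and sum, which yields
\begin{equation*}
|d^\nabla \dot A|^2=\tfrac12\sum_{i,j}\bigl|(\nabla_{e_i}\dot A)e_j-(\nabla_{e_j}\dot A)e_i\bigr|^2\le\tfrac12\sum_{i,j}\bigl|\dot A(e_i)\wedge e_j+e_i\wedge\dot A(e_j)\bigr|^2=(n-2)|\dot A|^2,
\end{equation*}
and then invoke the Weitzenb\"ock formula \eqref{w} for trace-free symmetric tensors with $\delta^\nabla\dot A=0$ to force $\dot A=0$; the conclusion that $\dot\Psi$ is then a Killing spinor, and hence that the tangent space to the connected component lies in $\mathcal{K}_{\frac12}(\SM^n)$, is the same as in your outline. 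If you insist on working with $(\dagger)$, you must use it as a spinorial (not merely a norm) identity, and you would still need an independent source for an upper bound on $|d^\nabla\dot A|$; as written, your Bochner scheme does not provide one.
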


\begin{proof}
Let $\mathcal{M}$ be the connected component of $\mathcal{GK}(\SM^n)$ containing $\mathcal{K}_{\frac12}(\SM^n)$ and 
let $\Psi_t$ be a curve in $\mathcal{M}$ starting at some point of $\mathcal{K}_{\frac12}(\SM^n)$, i.e. a smooth 1-parameter family of spinors on $\SM^n$ satisfying 
\beq\label{gkst}
\nabla_X\Psi_t=A_t(X)\.\Psi_t,
\eeq 
where $A_t\in\Gamma(\End^+(\T \SM^n))$ is symmetric for all $t$ and $A_0=\tfrac12\id$. Without any loss in generality we can assume that $\Psi_t$ has unit norm for every $t$. We will denote the derivative with respect to $t$ by a dot and drop the subscript whenever the objects are evaluated at $t=0$. Differentiating \eqref{gkst} with respect to $t$ and evaluating at $t=0$ yields
\beq\label{gkstd}
\nabla_X\dot\Psi=\dot A(X)\.\Psi+\tfrac12X\cdot\dot\Psi.
\eeq 
Taking the covariant derivative in this equation and skew-symmetrizing gives
$$R_{Y,X}\dot\Psi=- [(\nabla_X \dot A)Y  - (\nabla_Y \dot A)X  ] \cdot \Psi+[ \dot A(X) \wedge Y+X\wedge\dot A(Y) ]\cdot \Psi+\tfrac12 X  \wedge Y \cdot \dot\Psi.$$
Using the fact that the spinorial curvature on the sphere satisfies $R_{Y,X}\Phi=\tfrac12 X\wedge Y\cdot\Phi$ for every spinor $\Phi$, the previous equation reads
\beq\label{adot} [(\nabla_X \dot A)Y  - (\nabla_Y \dot A)X  ] \cdot \Psi=[ \dot A(X) \wedge Y+X\wedge\dot A(Y) ]\cdot \Psi.\eeq

On the other hand, differentiating at $t=0$ the equation \eqref{three2} satisfied by $A_t$ yields
$$0=2(\tr A)(\tr\dot A)-2\tr (A\dot A)=(n-1)\tr\dot A,$$
whence $\dot A$ is trace-free at $t=0$. Moreover, from \eqref{three1} we also get $\delta^\nabla\dot A=0$.

We now use the fact that $|X\cdot \Phi|^2=|X|^2$ for every $X\in \T M$ and for every unit spinor $\Phi$, whereas $|\omega\cdot\Phi|^2\le |\omega|^2$ for $\omega\in\Lambda^2M$. From \eqref{adot} we thus get (using a local orthonormal basis $e_i$ of the tangent bundle):
\bea|d^\nabla \dot A|^2&=&\tfrac12\sum_{i,j=1}^n|(\nabla_{e_i} \dot A)e_j  - (\nabla_{e_j} \dot A)e_i|^2\le\tfrac12 \sum_{i,j=1}^n| \dot A(e_i) \wedge e_j+e_i\wedge\dot A(e_j)|^2\\
&=&(n-1)|\dot A|^2+ \sum_{i,j=1}^ng(\dot A(e_i) \wedge e_j,e_i\wedge\dot A(e_j))=(n-2)|\dot A|^2.
\eea

Recall now the Weitzenb\"ock formula for trace-free symmetric tensors $h$ (cf. \cite[Prop. 4.1]{bour}):
\beq\label{w}(d^\nabla\delta^\nabla+\delta^\nabla d^\nabla)h=\nabla^*\nabla h+ h\circ\Ric-\mathring{R}(h),\eeq
where 
$$\mathring{R}(h)(X):=\sum_{i=1}^nR_{X,h(e_i)}e_i$$
(note that there is a sign change between Bourguignon's and our curvature convention). On the round sphere $\SM^n$ we have $\Ric=(n-1)\id$ and $\mathring{R}(h)(X)=-h(X)$. Applying \eqref{w} to $h:=\dot A$ and using the relation above $\delta^\nabla \dot A=0$, we get
$$\delta^\nabla d^\nabla \dot A=\nabla^*\nabla \dot A+n\dot A.$$
Taking the scalar product with $\dot A$ and integrating over $\SM^n$ (whose volume element is denoted by $\vol$) yields 
$$\int_{\SM^n}|d^\nabla \dot A|^2\vol=\int_{\SM^n}\left(|\nabla \dot A|^2+n|\dot A|^2\right)\vol,$$
which together with the previous inequality $|d^\nabla \dot A|^2\le (n-2)|\dot A|^2$ implies $\dot A=0$.

Going back to \eqref{gkstd} we thus see that $\dot\Psi$ is a Killing spinor. In other words, we have shown that for every $\Psi\in \mathcal{K}_{\frac12}(\SM^n)$, the tangent space $\T_\Psi \mathcal{M}$ is contained in $\mathcal{K}_{\frac12}(\SM^n)$. This shows that $\mathcal{M}=\mathcal{K}_{\frac12}(\SM^n)$. The proof of the statement for $\mathcal{K}_{-\frac12}(\SM^n)$ is similar.
\end{proof}

\section{Appendix. The canonical spinor on $3$-Sasakian manifolds of dimension $7$}

We give here an alternative definition of the canonical spinor on 3-Sasakian 7-dimensional manifolds discovered by Agricola and Friedrich \cite{af10}. This approach makes use of the Riemannian cone construction which we now recall. 

The Riemannian cone over $(M,g)$ is the Riemannian manifold $(\bar M,\bar g):=(\RM^+\times M,dt^2+t^2g)$. The radial vector $\xi:=t\tfrac{\partial}{\partial t}$ satisfies the equation 
\beq\label{dxi} \bar\nabla_X\xi=X,\qquad\forall\ X\in \T\bar M,
\eeq
where $\bar\nabla$ denotes the Levi-Civita covariant derivative of $\bar g$. Assume now that $M$ is 3-Sasakian. It is well known (and is nowadays the standard definition of 3-Sasakian structures) that $\bar M$ has a hyperk\"ahler structure $J_1,J_2,J_3$, such that the vector fields $\xi_i:=J_i(\xi)$  on $\bar M$ are Killing and tangent to the hypersurfaces $M_t:=\{t\}\times M$. When restricted to $M=M_1$, $\xi_i$ are unit Killing vector fields satisfying the 3-Sasakian relations. 

Suppose now that $M$ has dimension $7$. The real spin bundle of $M$ is canonically identified with the positive spin bundle $\Sigma^+\bar M$ restricted to $M_1=M$. With respect to this identification, if $\psi\in\Gamma(\Sigma M)$ is the restriction to $M$ of a spinor $\Psi\in\Gamma(\Sigma^+\bar M)$ and $X$ is any vector field on $M$ identified with a vector field on $\bar M$ along $M_1$, then
\beq\label{xp}X\cdot\psi=X\cdot\xi\cdot\Psi
\eeq
and
\beq\label{np}\nabla_X\psi=\bar\nabla_X\Psi+\tfrac12 X\cdot\xi\cdot\Psi.
\eeq
Recall now that the restriction to $\Sp(2)$ of the half-spin representation $\Sigma_8^+$ has a 3-dimensional trivial summand. Correspondingly, on $\bar M$ there exist three linearly independent $\bar\nabla$-parallel spinor fields on which every 2-form from $\sp(2)$ (i.e. commuting with $J_1$, $J_2$, $J_3$) acts trivially by Clifford multiplication. Moreover, there exists exactly one such unit spinor $\Psi_1$ (up to sign) on which the Clifford action of $\Omega_1$ (the K\"ahler form of $J_1$) is also trivial (cf. \cite{wang}). 
\begin{elem}
The spinor $\Psi_0:=\tfrac1{|\xi|^2}\xi\cdot\xi_1\cdot\Psi_1$ satisfies 
\beq\label{bar}\bar\nabla_X\Psi_0=\bar A(X)\cdot\xi\cdot\Psi_0,
\eeq
where 
$$\bar A(X):=\begin{cases}\quad\  0 &\hbox{if $X$ belongs to the distribution $D:=\la\xi,\xi_1,\xi_2,\xi_3\ra$}\\-2X &\hbox{if $X\in D^\perp$.}
\end{cases}$$
\end{elem}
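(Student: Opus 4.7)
My plan is to verify the identity by direct computation, splitting according to whether $X$ lies in $D$ or in $D^\perp$. First I would expand the left-hand side using the Leibniz rule together with $\bar\nabla_X\xi=X$ from \eqref{dxi}, $\bar\nabla_X\xi_1=J_1X$ (from the fact that the hyperk\"ahler complex structures are $\bar\nabla$-parallel), $\bar\nabla\Psi_1=0$, and $\bar\nabla_X|\xi|^{-2}=-2g(X,\xi)/|\xi|^4$, obtaining
\[\bar\nabla_X\Psi_0=\frac{1}{|\xi|^2}\bigl(X\cdot\xi_1+\xi\cdot J_1X\bigr)\cdot\Psi_1-\frac{2g(X,\xi)}{|\xi|^4}\,\xi\cdot\xi_1\cdot\Psi_1.\]
Simultaneously, the Clifford identity $\xi\cdot\xi=-|\xi|^2$ collapses the right-hand side to $\bar A(X)\cdot\xi\cdot\Psi_0=-\bar A(X)\cdot\xi_1\cdot\Psi_1$.

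For $X\in D$ the claim is that the left-hand side vanishes, and by linearity it suffices to check the four basis vectors $\xi,\xi_1,\xi_2,\xi_3$ separately. The cases $X=\xi$ and $X=\xi_1$ reduce to obvious cancellations from $g(\xi_1,\xi)=0$, $J_1\xi=\xi_1$, $J_1\xi_1=-\xi$ and $|\xi_1|=|\xi|$. For $X=\xi_2$ the identity collapses to showing $(\xi\cdot\xi_3-\xi_1\cdot\xi_2)\cdot\Psi_1=0$, and the crucial observation is that the $2$-form $\xi\wedge\xi_3-\xi_1\wedge\xi_2$ defines a skew endomorphism that commutes with each $J_i$ (a short check using the relations $J_iJ_j=\pm J_k$), hence lies in $\sp(2)$ and therefore annihilates $\Psi_1$ by the defining property of $\Psi_1$. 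The case $X=\xi_3$ is analogous, with the $\sp(2)$-form $\xi\wedge\xi_2+\xi_1\wedge\xi_3$ playing the same role.

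The case $X\in D^\perp$, where $g(X,\xi)=0$ and $J_1X\in D^\perp$, is where I expect the main obstacle. The identity reduces to a pointwise Clifford relation on $\Psi_1$, but the candidate $2$-forms now mix $D$ and $D^\perp$ and are no longer in $\sp(2)$, so the argument from the previous case fails. Instead, I plan to exploit the stronger constraint $\Omega_1\cdot\Psi_1=0$: applying the Clifford identity $Y\cdot\omega=\omega\cdot Y-2(Y\lrcorner\omega)$ with $\omega=\Omega_1$ twice — once to commute $\Omega_1$ past $X$ and once past $\xi$, using $X\lrcorner\Omega_1=J_1X$ and $\xi\lrcorner\Omega_1=\xi_1$ — one obtains $\Omega_1\cdot X\cdot\xi\cdot\Psi_1=2(X\cdot\xi_1-\xi\cdot J_1X)\cdot\Psi_1$, so the desired Clifford relation is equivalent to $\Omega_1$ annihilating the spinor $X\cdot\xi\cdot\Psi_1$. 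The delicate point is showing that $X\cdot\xi\cdot\Psi_1$ indeed lies in the kernel of $\Omega_1$; this relies on the finer characterization of $\Psi_1$ as the unique $\Omega_1$-null spinor inside the three-dimensional $\sp(2)$-invariant family of parallel spinors, rather than on $\sp(2)$-invariance alone, and is the step I expect to require the most care.
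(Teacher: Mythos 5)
Your expansion of $\bar\nabla_X\Psi_0$ and your treatment of the four directions in $D$ are correct and essentially the paper's own argument: the paper uses the $\sp(2)$-form $\xi\wedge\xi_1-\xi_2\wedge\xi_3$ to kill the $\xi_2$- and $\xi_3$-derivatives, while you use its cyclic companions $\xi\wedge\xi_3-\xi_1\wedge\xi_2$ and $\xi\wedge\xi_2+\xi_1\wedge\xi_3$; this is the same mechanism.

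The gap is in the case $X\in D^\perp$, which is where all the content of the lemma lies and which you do not actually prove. Your commutation computation $\Omega_1\cdot X\cdot\xi\cdot\Psi_1=2(X\cdot\xi_1-\xi\cdot J_1X)\cdot\Psi_1$ is correct, but it only shows that the identity to be proved is \emph{equivalent} to $\Omega_1\cdot(X\cdot\xi\cdot\Psi_1)=0$; declaring this ``the delicate point'' and appealing to the uniqueness characterization of $\Psi_1$ leaves the decisive step unproved, and that characterization alone will not deliver it. Indeed, the kernel of Clifford multiplication by $\Omega_1$ on $\Sigma^+\bar M$ is not stable under Clifford multiplication by $X\cdot\xi$: the $2$-form $X\wedge\xi$ has a non-trivial part anticommuting with $J_1$ (it is not of type $(1,1)$), so knowing only $\Omega_1\cdot\Psi_1=0$ one cannot conclude. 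What closes the argument is a second use of $\sp(2)$-invariance, with forms adapted to $X$: at a point of $M_1$ take the orthonormal basis $e_1=\xi$, $e_2=\xi_1$, $e_3=\xi_2$, $e_4=\xi_3$, $e_5=X$, $e_6=J_1X$, $e_7=J_2X$, $e_8=J_3X$. The forms $e_1\wedge e_2-e_3\wedge e_4$ and $e_5\wedge e_6-e_7\wedge e_8$ lie in $\sp(2)$, hence annihilate $\Psi_1$, giving $e_1\cdot e_2\cdot\Psi_1=e_3\cdot e_4\cdot\Psi_1$ and $e_5\cdot e_6\cdot\Psi_1=e_7\cdot e_8\cdot\Psi_1$; since $e_5\cdot e_1$ anticommutes with $e_1\cdot e_2$ and with $e_5\cdot e_6$, and commutes with $e_3\cdot e_4$ and $e_7\cdot e_8$, one gets
\[\Omega_1\cdot e_5\cdot e_1\cdot\Psi_1= e_5\cdot e_1\cdot\left(-e_1\cdot e_2+e_3\cdot e_4-e_5\cdot e_6+e_7\cdot e_8\right)\cdot\Psi_1=0,\]
which is exactly your missing statement. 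This is in substance how the paper concludes as well: it combines the same two $\sp(2)$-relations with $\Omega_1\cdot\Psi_0=0$ to obtain $(e_1\cdot e_2+e_5\cdot e_6)\cdot\Psi_0=0$ and then finishes with a short Clifford manipulation yielding $\bar\nabla_X\Psi_0=-2X\cdot\xi\cdot\Psi_0$. So your outline is compatible with the correct proof, but as written the crucial step is asserted rather than established.
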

\begin{proof}
Since $J_i$ are $\bar\nabla$-parallel, \eqref{dxi} yields $\bar\nabla_X\xi_i=J_i(X)$ for all $X\in\T\bar M$. We thus have 
\beq\label{der}\bar\nabla_X\Psi_0=\tfrac1{|\xi|^2}(X\cdot\xi_1\cdot\Psi_1+\xi\cdot J_1(X)\cdot\Psi_1)-\tfrac2{|\xi|^4}\bar g(\xi,X)\xi\cdot\xi_1\cdot\Psi_1.\eeq
This relation gives immediately $\bar\nabla_\xi\Psi_0=0$ and $\bar\nabla_{\xi_1}\Psi_0=0$. Moreover, since the 2-form $\xi\wedge\xi_1-\xi_2\wedge\xi_3$ commutes with $J_1,J_2,J_3$, it belongs to $\sp(2)$ and thus acts trivially by Clifford multiplication on $\Psi_1$. We then obtain $\xi\cdot\xi_1\cdot\Psi_0=\xi_2\cdot\xi_3\cdot\Psi_0$, which together with \eqref{der} yields $\bar\nabla_{\xi_2}\Psi_0=\bar\nabla_{\xi_3}\Psi_0=0$.

It remains to treat the case where $X$ is orthogonal to $\la\xi,\xi_1,\xi_2,\xi_3\ra$. Assume that $X$ is scaled to have unit norm. We consider the orthonormal basis of $\T\bar M$ at some point $x\in M_1$ given by $e_1=\xi,e_2=\xi_1,e_3=\xi_2,e_4=\xi_3,e_5=X,e_6=J_1(X),e_7=J_2(X),e_8=J_3(X)$. 
Since $\Omega_1\cdot\Psi_1=0$ where $\Omega_1=e_1\cdot e_2+e_3\cdot e_4+e_5\cdot e_6+e_7\cdot e_8$, we obtain $\Omega_1\cdot\Psi_0=0$.
Now, the 2-form $e_5\wedge e_6-e_7\wedge e_8$ belongs to $\sp(2)$ and its Clifford action commutes with $e_1\cdot e_2$, thus 
$e_5\cdot e_6\cdot\Psi_0=e_7\cdot e_8\cdot\Psi_0$. Together with the relation $e_1\cdot e_2\cdot\Psi_0=e_3\cdot e_4\cdot\Psi_0$ proved above and the fact that 
$\Omega_1\cdot\Psi_0=0$, we get
\beq (e_1\cdot e_2+e_5\cdot e_6)\cdot\Psi_0=0.\eeq
Using \eqref{der} we then compute at $x$:
\bea \bar\nabla_X\Psi_0&=&(X\cdot\xi_1\cdot\Psi_1+\xi\cdot J_1(X)\cdot\Psi_1)=(e_5\cdot e_2+e_1\cdot e_6)\cdot (-e_1\cdot e_2\cdot\Psi_0)\\
&=&(e_1\cdot e_5+e_2\cdot e_6)\cdot \Psi_0=e_5\cdot e_2\cdot(e_1\cdot e_2+e_5\cdot e_6)\cdot\Psi_0+2e_1\cdot e_5\cdot\Psi_0\\
&=&2e_1\cdot e_5\cdot\Psi_0=-2X\cdot\xi\cdot \Psi_0,
\eea
thus proving the lemma.
\end{proof}
As a direct consequence of this result, together with \eqref{xp}--\eqref{np}, we obtain the following:
\begin{ecor}[\cite{af10}, Thm. 4.1] The spinor $\psi_0:=\Psi_0\res_M$ is a generalized Killing spinor on $M$ satisfying 
\beq\label{bar1}\nabla_X\psi_0=A(X)\cdot\psi_0,
\eeq
where 
$$A(X):=\begin{cases}\quad\tfrac12X& \hbox{if $X$ belongs to the distribution $D:=\la\xi_1,\xi_2,\xi_3\ra$}\\ \; -\tfrac32X& \hbox{if $X\in D^\perp$.}
\end{cases}$$
\end{ecor}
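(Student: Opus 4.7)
The plan is to deduce the corollary as an immediate consequence of the preceding lemma together with the cone-to-hypersurface spinor dictionary \eqref{xp}--\eqref{np}. I would first restrict everything to $M_1=\{1\}\times M$, on which $|\xi|^2=1$ and $\xi$ is the unit normal to $M_1$ inside $\bar M$. Under the canonical identification $\Sigma M\cong\Sigma^+\bar M\big|_{M_1}$, the spinor $\psi_0$ equals $\Psi_0\big|_{M_1}$, and \eqref{xp} specializes to $Y\cdot\psi_0=Y\cdot\xi\cdot\Psi_0$ for every $Y\in \T M$.

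For any $X\in\T M$, combining \eqref{np} with the preceding lemma gives
$$\nabla_X\psi_0=\bar\nabla_X\Psi_0+\tfrac12 X\cdot\xi\cdot\Psi_0=\bigl(\bar A(X)+\tfrac12 X\bigr)\cdot\xi\cdot\Psi_0=\bigl(\bar A(X)+\tfrac12 X\bigr)\cdot\psi_0,$$
where the last step uses \eqref{xp} applied to the vector $\bar A(X)+\tfrac12 X$, which is tangent to $M$ since in each of the two cases of the lemma $\bar A(X)$ is a scalar multiple of $X$. I then split $\T M$ into the tangential part of the ambient distribution $D=\langle\xi,\xi_1,\xi_2,\xi_3\rangle$, namely $\langle\xi_1,\xi_2,\xi_3\rangle$, and its orthogonal complement in $\T M$, which coincides with $D^\perp$ because $\xi\perp \T M$. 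For $X\in\langle\xi_1,\xi_2,\xi_3\rangle$ the lemma gives $\bar A(X)=0$, so $\nabla_X\psi_0=\tfrac12 X\cdot\psi_0$; for $X\in\langle\xi_1,\xi_2,\xi_3\rangle^\perp$ we get $\bar A(X)=-2X$, hence $\nabla_X\psi_0=-\tfrac32 X\cdot\psi_0$. This matches exactly the piecewise formula defining $A$.

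There is essentially no obstacle here: the preceding lemma has already carried the computational weight by evaluating $\bar\nabla\Psi_0$ on the cone, and the corollary is then a clean bookkeeping step via the standard hypersurface spinor correspondence. The one minor point requiring attention is the compatibility between the distribution $D$ on $\bar M$ appearing in the lemma and the distribution $D$ on $M$ appearing in the corollary (both denoted by the same letter); this is immediate from $\xi\in D$ together with $\xi\perp \T M$, which identifies the two orthogonal complements along $M_1$.
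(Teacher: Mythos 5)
Your argument is correct and is exactly the paper's route: the paper obtains the corollary as a ``direct consequence'' of the lemma combined with \eqref{xp}--\eqref{np}, which is precisely the computation $\nabla_X\psi_0=\bar\nabla_X\Psi_0+\tfrac12 X\cdot\xi\cdot\Psi_0=\bigl(\bar A(X)+\tfrac12 X\bigr)\cdot\psi_0$ that you spell out, including the correct matching of the two distributions along $M_1$. No issues.
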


\end{document}